\newcommand{\norm}[1]{\Vert#1\Vert}
\newcommand{\R}{\mathbb R}
\newcommand{\C}{\mathbb C}
\newcommand{\proj}{\mathbb{P}}
\newcommand{\im}{\mathrm{i}}
\newcommand{\RE}{\mathrm{Re}\,}
\newcommand{\D}{\partial}
\newcommand{\eps}{\varepsilon}
\newcommand{\A}{\mathcal{A}}
\newcommand{\dv}{\mathrm{div}\,}
\newcommand{\nb}{\nabla}
\newcommand{\vv}{\upsilon}
\newcommand{\tr}{\mathrm{tr}}
\newcommand{\ls}{\leqslant}
\newcommand{\gs}{\geqslant}
\newcommand{\N}{\mathcal{N}}
\newcommand{\NN}{\mathbb{N}}
\newcommand{\no}{\nonumber}
\newcommand{\h}{\mathcal{H}}
\newcommand{\sur}{{\Gamma_t}}
\newcommand{\etaD}[1][\D]{#1_\eta} 
\newcommand{\lsi}[2][\D]{#1_{#2}} 
\numberwithin{equation}{section}
\newtheorem{theorem}{Theorem}[section]
\newtheorem{corollary}[theorem]{Corollary}
\newtheorem{lemma}[theorem]{Lemma}
\newtheorem{proposition}[theorem]{Proposition}
\theoremstyle{definition}
\theoremstyle{remark}
\begin{document}
\title[Ill-posedness of ideal MHD]{Ill-posedness of free boundary problem of the incompressible ideal MHD}

\author{Chengchun Hao}

\address{Institute of Mathematics,
Academy of Mathematics and Systems Science,
and Hua Loo-Keng Key Laboratory of Mathematics,
Chinese Academy of Sciences,
Beijing 100190, China}
\email{hcc@amss.ac.cn}

\author{Tao Luo}

\address{Department of Mathematics,
	City University of Hong Kong,
	Hong Kong, China}
\email{taoluo@cityu.edu.hk}

\maketitle
\begin{abstract}
	In the present paper, we  show the ill-posedness of the free boundary problem of the incompressible ideal magnetohydrodynamics (MHD)
	equations  in two spatial dimensions for any positive vacuum permeability $\mu_0$, in Sobolev spaces. The analysis is uniform for any  $\mu_0>0$. 
\end{abstract}

\section{Introduction}
This paper is concerned with the ill-posedness of the following free boundary problem for the incompressible ideal MHD equations
\begin{subequations}\label{e'}
	\begin{numcases}{}
	\D_t \vv +\vv \cdot \nabla \vv  +\nb q =\frac{1}{\mu_0}\tilde{H} \cdot\nabla \tilde{H},  &in $\Omega_t$, \label{e'.1}\\
	\D_t \tilde{H} +\vv\cdot \nabla \tilde{H}=\tilde{H}\cdot\nabla \vv,  &in $\Omega_t$, \label{e'.2}\\
	\dv \vv=0, \quad \dv \tilde{H}=0,  &in $\Omega_t$, \label{e'.3}\\
	\D_t\Gamma(t)=\vv\cdot \N,\quad \tilde{H}\cdot \N=0,\quad q=0,   &on $\sur$,
	\label{e'.4}\\
	\vv(0,x)=\vv_0(x),\quad \tilde{H}(0,x)=\tilde{H}_0(x), &$x\in \Omega:=\Omega_0$,
	\end{numcases}
\end{subequations}
where $\vv$ is the velocity field, $\tilde{H}$ is the magnetic field, $q$ is the total pressure and $\mu_0>0$ is the vacuum permeability; $\N$ is the unit outward  normal vector on the boundary $\Gamma_t$, $\Omega_0$ is the initial bounded domain. 

Without the magnetic field, i.e. $\tilde{H}=0$ in \eqref{e'}, the problem \eqref{e'} reduces to the free boundary problem of incompressible 
Euler equations, for which  the local-in-time well-posedness in Soblev spaces was obtained first in \cite{wu1, wu2} for the irrotational case. Extensions including the case without irrotationalilty assumption have been made in   \cite {ABZ},  \cite{AM}, \cite{CL00}, \cite{Coutand}, \cite{HLarma}, \cite{HaoWang}, \cite{L1}, 
\cite{Lindblad2}, \cite{LN}, \cite{luozeng}, \cite{SZ} and \cite{zhang}. The Taylor sign condition, $\nabla_{\N} p<0$ on 
$\partial \Omega_t$,  plays a crucial role  in establishing the above well-posedness results for Euler equations, where $p$ is the fluid pressure for Euler equations. It was found by Ebin \cite{Ebin} that the free boundary problem for incompressible Euler equations is ill-posed when the Taylor sign condition fails. A natural question for MHD free boundary problem is that if this is still the case or the magnetic field has some stablizing effect, as discussed for the current-vortex sheet problem of MHD in \cite{chenwang, SWZ, WangYu13}.   A stability condition, 
\begin{equation}\label{ts}
\nabla_{\N} q<0, \ \text{ on }  \partial \Omega_t
\end{equation}  
was identified in \cite{HLarma} for the MHD free boundary problem \eqref{e'} under which the a priori estimates are derived.  The present paper is concerned with  what will happen if this condition is violated. 

In 2-spatial dimension case, there is a particular steady solution to \eqref{e'} with pure rotation:  For $t\gs 0$, $\Omega_t=\{x\in \R^2:\ |x|\ls 1\}=B_1(0)$,  the unit disk, $\vv(t, x)=(-x_2,\  x_1)=x^{\perp}$, $\tilde H(t, x)=b\vv (t,\ x)$ (for $x=(x_1,\  x_2)$ and $b\in\R$). In this case, $q$ is solved by
the following Dirichlet problem:
$$\Delta q=2\left(1-\frac{b^2}{\mu_0}\right),\ {\rm in}\ B_1(0); \  q=0, \ {\rm on} \ \partial B_1(0).$$
For this solution, it is easy to verify that, on $\partial B_1(0)$,
\begin{equation}\label{taylor}
\nabla_{\N} q=\left(\frac{b^2}{\mu_0}-1\right)(\vv\cdot \nabla \vv)\cdot x=1-\frac{b^2}{\mu_0}, \end{equation}
by noting that $\N=x$ on $\partial B_1(0)$. Therefore, the Taylor sign condition \eqref{ts} fails in this case when $\mu_0\gs b^2$.  A natural question arises: For $\mu_0\gs b^2$, is the free boundary problem 
\eqref{e'} still well-posed? We prove in this paper that this is not 
the case when $\mu_0 \gg b^2$ in the following sense: 

\textit{we can construct a family of initial data for the problem \eqref{e'} which 
	converges to the above particular steady rotation solution. However, as long as $t>0$, the family of solutions to \eqref{e'} with those initial data diverges in some Sobolev $H^{\mu}$-norm for $\mu\gs 2$.} 

It should be also noted that our analysis is uniform in $\mu_0>0$. 

Our construction is strongly motivated by that of Ebin \cite{Ebin}
for incompressible Euler equations. This approch  involves showing that the linearized problem about the particular steady rotation solution has solutions which exhibit rapid exponential growth and the actual nonlinear problem behaves like the linearized one. 

There are few results for the posedness of the free boundary problem of incompressible ideal MHD equations.  A priori estimates for this problem were derived in \cite{HLarma} with a bounded initial domain homeomorphic to a ball, provided that the size of the magnetic field to be invariant on the free boundary. A local existence result was proved in \cite{GW} for the initial flat domain.     The plasma-vacuum system was investigated  in \cite{Hao17} where the a priori estimates were derived. For the special case where the magnetic field is zero on the free boundary and in vacuum,  the local existence and uniqueness of the free boundary problem of incompressible viscous-diffusive MHD flow in three-dimensional space with infinite depth setting was proved in \cite{Lee} where also a local unique solution was obtained  for the  free boundary MHD without kinetic viscosity and magnetic diffusivity via zero kinetic viscosity-magnetic diffusivity limit. 
For the incompressible viscous MHD equations, a free boundary problem  in a simply connected domain of $\R^3$ was studied by a linearization technique and the construction of a sequence of successive approximations in \cite{PS10} with an irrotational condition for magnetic fields in a part of the domain. 
The well-posedness of the linearized plasma-vacuum interface problem in incompressible ideal MHD was studied in \cite{MTT14} in an unbounded plasma domain. For other related results of  MHD equations with free boundaries or interfaces, one may refer to \cite{chenwang, Lee2,  ST, Trakhinin, WangYu13, WangXin17}.

\section{Lagrangian Description}
For simplicity, we denote $H=\mu_0^{-1/2}\tilde{H}$. Then, the problem \eqref{e'} reduces to the case $\mu_0=1$, i.e., 
\begin{subequations}\label{e}
	\begin{numcases}{}
	\D_t \vv +\vv \cdot \nabla \vv  +\nb q =H \cdot\nabla H,  &in $\Omega_t$, \label{e.1}\\
	\D_t H +\vv\cdot \nabla H=H\cdot\nabla \vv,  &in $\Omega_t$, \label{e.2}\\
	\dv \vv=0, \quad \dv H=0,  &in $\Omega_t$, \label{e.3}\\
	\D_t\Gamma(t)=\vv\cdot \N,\quad H\cdot \N=0,\quad q=0,   &on $\sur$,
	\label{e.4}\\
	\vv(0,x)=\vv_0(x),\quad H(0,x)=H_0(x), &$x\in \Omega:=\Omega_0$.
	\end{numcases}
\end{subequations}

We transform the system \eqref{e} into Lagrangian variables (e.g. \cite{Morr09}). We denote $\eta=\eta(t,a)=(\eta^1,\eta^2)$  the position of a fluid element or parcel at time $t$ with $a=(a^1,a^2)$ being the fluid element label,  defined to be the position of the fluid element at the initial time, $a=\eta(0,a)$, but this is  not  necessarily always  the case. Denote $\Omega_t$  the domain occupied by the fluid at time $t$, then $\eta:\Omega_0\to\Omega_t$ is assumed to be 1-1 and onto, at each fixed time $t$.

Let $\D \eta^i/\D a^j=: \eta^i_{,j}$  be  the deformation matrix ,  its Jacobian determinant $J:=\det(\eta^i_{,j})$ is given by 
\begin{align*}
J=\frac{1}{2}\eps_{kj}\eps^{il}\eta^k_{,i}\eta^j_{,l},
\end{align*}
where $\eps_{ij}=\eps^{ij}$ is the two-dimensional unit, purely antisymmetric, Levi-Civita tensor density.  In this notation, 
\begin{align}\label{Jac}
d\eta=Jda,
\end{align}
and components of an area form map according to
\begin{align}\label{JacS}
(dS(q))_i=Ja^j_{,i}(dS(a))_j,
\end{align}
where $Ja^j_{,i}$ is the transpose of the cofactor matrix of $\eta^j_{,i}$,  given by
\begin{align*}
Ja^j_{,i}=\eps_{ik}\eps^{jl}\eta^{k}_{,l}.
\end{align*}

Clearly,  $\dot{\eta}(t,a)=\vv(t,x)$, where $\dot{\eta}(t,a)=\frac{\partial \eta(t, a)}{\partial t}$. The label of the element will given by $a=\eta^{-1}(t,x)=:a(t,x)$. Thus, the Eulerian velocity field is given by 
\begin{align*}
\vv(t,x)=\dot{\eta}(t,a)|_{a=a(t,x)}.
\end{align*}
For an incompressible fluid,  $J=1$. Thus, we can invert $\eta=\eta(t,a)$ to obtain $a=a(t,\eta)$, 
\begin{align}\label{etainverse}
\eta^i_{,k}a^k_{,j}=a^i_{,k}\eta^k_{,j}=\delta^i_j,
\end{align}
where $a^k_{,j}=\D a^k/\D \eta^j$  (repeated indices are summed), and the components of the Eulerian gradient are given by
\begin{align*}
\left.\frac{\D}{\D x^k}=a^i_{,k}\frac{\D}{\D a^i}\right|_{a=a(t,x)}.
\end{align*}
Thus, for $x=\eta(t,a)$ and any function $f(t,a)=\tilde{f}(t,x)=\tilde{f}(t,\eta(t,a))$, 
\begin{align*}
\dot{f}|_{a=a(t,x)}=\frac{\D \tilde{f}}{\D t}+\dot{\eta}(t,a)\left.\frac{\D \tilde{f}}{\D x^i}\right|_{a=a(t,x)}=\frac{\D \tilde{f}}{\D t}+\vv\cdot\nb \tilde{f}(t,x).
\end{align*}

For ideal MHD, a magnetic field, $H_0(a)$, can be attached to a fluid element, and then the frozen flux condition yields $H\cdot dS(x)=H_0\cdot dS(a)$, and from \eqref{JacS} we obtain
\begin{align}\label{H.sol}
H^i=\eta^i_{,j}H_0^j,
\end{align}
which can be also obtained from the equation \eqref{e.2}. In fact, we have by \eqref{e.2} 
\begin{align*}
\dot{H}^j=H^k a^i_{,k}\vv^j_{,i}=H^k a^i_{,k}\dot{\eta}^j_{,i}=-H^k \dot{a}^i_{,k}\eta^j_{,i},
\end{align*}
due to
\begin{align*}
\dot{\eta}^i_{,k}a^k_{,j}=-\eta^i_{,k}\dot{a}^k_{,j},
\end{align*}
from \eqref{etainverse}, and  by \eqref{etainverse} again
\begin{align*}
\dot{H}^ja^l_{,j}=-H^k \dot{a}^i_{,k}\eta^j_{,i}a^l_{,j}=-H^k \dot{a}^l_{,k},
\end{align*}
and then
\begin{align*}
\frac{d}{dt}(H^ja^l_{,j})=0,
\end{align*}
which yields the desired identity \eqref{H.sol}. 

It follows from \eqref{e.1}, \eqref{e.3} and $q=0$ on $\Gamma_t=\eta(\Gamma)$,  that
\begin{subequations}\label{q}
	\begin{numcases}{}
	\Delta q=\tr(D H)^2-\tr(D\vv)^2, &in $\Omega_t$,\\
	q=0, &on $\Gamma_t$,
	\end{numcases}
\end{subequations}
where $\tr(D\vv)^2:=\D_i\vv^j\D_j\vv^i$ and $\tr(D H)^2:=\D_iH^j\D_jH^i$ are the trace of the square of the matrices of differentiations  of $\vv$ and $H$, respectively.  This elliptic Dirichlet boundary value problem admits a unique solution, so $\Delta^{-1}$ is well-defined. Here $\Delta^{-1}g=f$ in a domain means $\Delta f=g$ in this domain and $f=0$ on the boundary of this domain. Hence,
\begin{align}\label{q.sol}
q=\Delta^{-1}(\tr(D H)^2-\tr(D\vv)^2).
\end{align}

Therefore,  \eqref{e.1} can be rewritten as 
\begin{align}\label{eq.eta}
\ddot{\eta}=(\nb (\Delta^{-1}(\tr(D\vv)^2-\tr(D H)^2)))\circ \eta+(H\cdot\nb H)\circ\eta.
\end{align}

Note that $\vv(t,x)=\dot{\eta}(t,a(t,x))$ and \eqref{H.sol}, so \eqref{eq.eta} is of the form
\begin{align}\label{eqn}
\ddot{\eta}=Z(\eta,\dot{\eta}).
\end{align}
Then we study the initial value problem for \eqref{eqn} with the initial data $\eta(0)=\eta_0$ and $\dot{\eta}(0)$.
\section{An example} \label{sec.examp}

As in \cite{Ebin}, we consider a disc of MHD fluid spinning at constant angular velocity.  Let $\Omega$ be the unit disc in $\R^2$. For convenience, we identify  points in $\R^2$  with the complex numbers $\C$.  Then $\eta(0)=\eta_0$ is the inclusion of $\Omega$ in $\C$. We choose  $\dot{\eta}(0)$ to be a $\pi/2$ rotation. Thus, for $z\in \Omega$ being a complex variable, $\eta(0,z)=z$ and $\dot{\eta}(0,z)=\im z$. Since $H_0\cdot\N=0$ on the boundary, we can take $H_0(\eta(z))=\im bz$ for some constant $b\in \R$ and $|b|<1$. 

As we will demonstrate, the solution to \eqref{eq.eta} with the above initial data is $\eta(t,z)=e^{\im t}z$ and $H(t,z)=\im bz$.  We have $\dot{\eta}(t,z)=\im \eta(t,z)=\im e^{\im t}z$, and $a(t,z)=e^{-\im t}z$. Thus, $\vv(t,z)=\dot{\eta}(t,a(t,z))=\im e^{\im t}a(t,z)=\im e^{\im t}e^{-\im t}z=\im z$. By using real variables, we find that $D\eta=e^{\im t}I$ where $I$ is the $2\times 2$  unit matrix $(\delta^i_j)$. Thus, by \eqref{H.sol}, we have $H(t,z)=\im b z$. Moreover,  $\vv(x_1,x_2)=(-x_2,x_1)$ and $H(x_1,x_2)=b(-x_2,x_1)$, both  satisfy the divergence-free condition, and we get the matrices
$$D\vv=\left(\begin{matrix}
0&-1\\1&0
\end{matrix}\right),\quad \text{and} \;
DH=\left(\begin{matrix}
0&-b\\b&0
\end{matrix}\right).$$
Thus, $\tr(D\vv)^2=-2$ and $\tr(DH)^2=-2b^2$, and the Dirichlet problem is solved by $q=(b^2-1)\Delta^{-1}(-2)=\frac{1-x_1^2-x_2^2}{2}(b^2-1)$. Then, $\nb q=(1-b^2)(x_1,x_2)=(1-b^2)z$. It is easy to check that $H\cdot \nb H=-b^2z$ by using real variables.

Since $\dot{\eta}(t,z)=\im \eta(t,z)$ and $\ddot{\eta}(t,z)=-\eta(t,z)$, we get
\begin{align*}
\ddot{\eta}(t,z)+\nb q(t,\eta(t,z))=(H\cdot\nb H)(\eta(t,z)),
\end{align*}
namely, $\eta(t,z)$ satisfies \eqref{eq.eta}.  In this example we note that, both the velocity $\vv(t,z)=\im z$ and the magnetic field $H(t,z)=\im bz$ are independent of the time $t$.

\section{Reformulation in Lagrangian variables}

We first recall some operators defined in \cite{Ebin}. The operator $\Re_\eta:C^\infty(\eta(\Omega))\to C^\infty(\Omega)$ defined by $\Re_\eta(f)=f\circ \eta=f(\eta)$. It is obvious that the inverse of $\Re_\eta$ is $\Re_{\eta^{-1}}$ given by $\Re_{\eta^{-1}}(g)=g\circ {\eta^{-1}}=g({\eta^{-1}})$ where $\eta^{-1}(t,x)=a(t,x)$.

For a differential operator $P$, we set $\etaD[P]=:\Re_\eta P \Re_{\eta^{-1}}$.  For examples,  the operator $\etaD[D]=\Re_\eta D\Re_{\eta^{-1}}: C^\infty(\Omega)\to C^\infty(\Omega)$, where $D$ is the total derivative, 
and the operator $\etaD[\nb]=\Re_\eta \nb\Re_{\eta^{-1}}$ where $\nb$ is the gradient. Define 
\begin{align*}
K(\eta)=\Re_\eta \Delta \Re_{\eta^{-1}}: C_0^\infty(\Omega)\to C^\infty(\Omega)
\end{align*}
where $C_0^\infty(\Omega)=\{f\in C_0^\infty(\Omega): f|_{\D\Omega}=0 \}$. Namely, $K(\eta)=\etaD[\Delta]$. $K(\eta)$ is invertible because so does $\Delta$. In fact,
\begin{align*}
K(\eta)^{-1}=\Re_\eta \Delta^{-1} \Re_{\eta^{-1}}.
\end{align*}

Since $\vv=\dot{\eta}\circ\eta^{-1}$, we have 
\begin{align*}
D\vv\circ\eta=\Re_\eta D\vv=\Re_\eta D(\dot{\eta}\circ \eta^{-1})=\Re_\eta D\Re_{\eta^{-1}}\dot{\eta}=\etaD[D] \dot{\eta}.
\end{align*}
Similarly, from \eqref{H.sol}, it follows 
\begin{align*}
\D_iH^j(t,x)=&\D_i(\eta^j_{,k}(t,a(t,x))H^k_0(x)\\
=&\eta^j_{,kl}(t,a(t,x))a^l_{,i}(t,x)H^k_0(x)+\eta^j_{,k}(t,a(t,x))H^k_{0,i}(x),\\
\tr(DH)^2=&(\eta^j_{,kl}(t,a(t,x))a^l_{,i}(t,x)H^k_0(x)+\eta^j_{,k}(t,a(t,x))H^k_{0,i}(x))\\
&\cdot(\eta^i_{,mn}(t,a(t,x))a^n_{,j}(t,x)H^m_0(x)+\eta^i_{,m}(t,a(t,x))H^m_{0,j}(x))\\
=&\eta^j_{,kl}(t,a(t,x))a^l_{,i}(t,x)H^k_0(x)\eta^i_{,mn}(t,a(t,x))a^n_{,j}(t,x)H^m_0(x)\\
&+2\eta^j_{,kl}(t,a(t,x))H^k_0(x)H^l_{0,j}(x)\\
&+\eta^j_{,k}(t,a(t,x))H^k_{0,i}(x)\eta^i_{,m}(t,a(t,x))H^m_{0,j}(x)\\
=&\tr((\eta^j_{,kl}\circ\eta^{-1}) (\eta^{-1})^l_{,i}H^k_0)^2+\tr((\eta^j_{,kl}\circ\eta^{-1})(H^k_0H^l_0)_{,i})\\
&+\tr((\eta^j_{,k}\circ\eta^{-1})H^k_{0,i})^2,
\end{align*}
and
\begin{align*}
(H\cdot\nb H)_k=&\delta_{ki}H^ja^l_{,j}H^i_{,l}=\delta_{ki}\eta^j_{,m} H^m_0a^l_{,j}(\eta^i_{,n}H^n_0)_{,l}=\delta_{ki} H^l_0(\eta^i_{,n}H^n_0)_{,l}\\
=&\delta_{ki} H^l_0(\eta^i_{,nm}a^m_{,l}H^n_0+\eta^i_{,n}H^n_{0,l})\\
=&\delta_{ki}(\eta^i_{,nm}\circ\eta^{-1})(\eta^{-1})^m_{,l} H^l_0H^n_0+\delta_{ki} (\eta^i_{,n}\circ\eta^{-1})H^n_{0,l}H^l_0.
\end{align*}
Then, from \eqref{eq.eta}, we get
\begin{align*}
Z(\eta,\dot{\eta})=&(\nb (\Delta^{-1}\Re_{\eta^{-1}}(\tr(\etaD[D] \dot{\eta})^2)\circ \eta\\
&-(\nb \Delta^{-1}\tr((\eta^j_{,kl}\circ\eta^{-1}) (\eta^{-1})^l_{,i}H^k_0)^2)\circ \eta\\
&-(\nb \Delta^{-1}\tr((\eta^j_{,kl}\circ\eta^{-1})(H^k_0H^l_0)_{,i}))\circ \eta\\
&-(\nb \Delta^{-1}\tr((\eta^j_{,k}\circ\eta^{-1})H^k_{0,i})^2)\circ \eta\\
&+ ((\eta_{,nm}\circ\eta^{-1})(\eta^{-1})^m_{,l} H^l_0H^n_0)\circ \eta\\
&+ ((\eta_{,n}\circ\eta^{-1})H^n_{0,l}H^l_0)\circ \eta\\
=&\etaD[\nb]K(\eta)^{-1}\tr(\etaD[D] \dot{\eta})^2-\etaD[\nb]K(\eta)^{-1}\tr(\eta^j_{,kl} \Re_\eta((\eta^{-1})^l_{,i}H^k_0))^2\\
&-\etaD[\nb]K(\eta)^{-1}\tr(\eta^j_{,kl}\Re_\eta(H^k_0H^l_0)_{,i})-\etaD[\nb]K(\eta)^{-1}\tr(\eta^j_{,k}\Re_\eta H^k_{0,i})^2\\
&+ \eta_{,nm}\Re_\eta((\eta^{-1})^m_{,l} H^l_0H^n_0)+ \eta_{,n}\Re_\eta(H^n_{0,l}H^l_0).
\end{align*}
Hence, \eqref{eqn} can be written as
\begin{align}\label{eqn1}
\ddot{\eta}=&\etaD[\nb]K(\eta)^{-1}\tr(\etaD[D] \dot{\eta})^2-\etaD[\nb]K(\eta)^{-1}\tr(\eta^j_{,kl} \Re_\eta((\eta^{-1})^l_{,i}H^k_0))^2\no\\
&-\etaD[\nb]K(\eta)^{-1}\tr(\eta^j_{,kl}\Re_\eta(H^k_0H^l_0)_{,i})-\etaD[\nb]K(\eta)^{-1}\tr(\eta^j_{,k}\Re_\eta H^k_{0,i})^2\no\\
&+ \eta_{,nm}\Re_\eta((\eta^{-1})^m_{,l} H^l_0H^n_0)+ \eta_{,n}\Re_\eta(H^n_{0,l}H^l_0).
\end{align}

\section{Linearization of the equation}

We consider now a family of solutions to \eqref{eqn1} parametrized by $s$, call it $\zeta(t,s)$. Assume that $\zeta$ is differentiable in $s$ and let
$$w(t)=\D_s\zeta(t,s)|_{s=0},$$
the tangent of $\zeta(t,s)$ at $s=0$. 
Denote $\zeta(t,0)$ by $\zeta(t)$. Then $w(t)$ satisfies
\begin{align}\label{eqn2}
\ddot{w}(t)=&\D_s\ddot{\zeta}(t,s)|_{s=0}=\D_s Z(\zeta(t,s),\dot{\zeta}(t,s))|_{s=0}\no\\
=&DZ(\zeta(t,s),\dot{\zeta}(t,s))(\D_s\zeta(t,s),\D_s\dot{\zeta}(t,s))|_{s=0}\no\\
=&DZ(\zeta(t),\dot{\zeta}(t))(w(t),\dot{w}(t)).
\end{align}

In order to compute \eqref{eqn2} more explicitly,  we set $u=\Re_{\zeta^{-1}}w$ , $w=u\circ \zeta$. Clearly $K(\zeta)^{-1}$ is inverse to $K(\zeta)$ if the domain of $K(\zeta)$ is $C_0^\infty(\Omega)$. If this domain is enlarged  to $C^\infty(\Omega)$, $K(\zeta)^{-1}$ is only a right inverse.  That is $K(\zeta)K(\zeta)^{-1}=I$, but
\begin{align*}
K(\zeta)^{-1}K(\zeta)=I-\h(\zeta),
\end{align*}
where $\h(\zeta)$ is defined as follows. For $\zeta=Id$, the identity, $\h(\zeta)$ projects a function onto its harmonic part. Thus, if $\h(Id)f=g$ then $f=g$  on $\D\Omega$ and $\Delta g=0=K(Id)g$. For arbitrary $\zeta$, $\h(\zeta)f=g$ if $f=g$ on $\D\Omega$ and $K(\zeta)g=0$.

Next, we recall some identities on the commutators of operators, denoted by $[\cdot,\cdot]$, proved in \cite{Ebin}.

\begin{lemma}\label{lem.1}
	\begin{enumerate}[1)] 
		\item $\D_s\lsi[\nb]{\zeta(t,s)}|_{s=0}=\Re_{\zeta(t)}[u\cdot\nb,\nb]\Re_{\zeta(t)^{-1}}=\lsi[{[u\cdot\nb,\nb]}]{\zeta(t)}$, where $u\cdot\nb$ means the derivative in direction $u$.
		\item $\D_sK(\zeta(t,s))|_{s=0}=\lsi[{[u\cdot\nb,\Delta]}]{\zeta(t)}$.
		\item $\D_sK(\zeta(t,s))^{-1}|_{s=0}=\lsi[{([u\cdot\nb, \Delta^{-1}]-\h(u\cdot\nb)\Delta^{-1})}]{\zeta(t)}$.
	\end{enumerate}
\end{lemma}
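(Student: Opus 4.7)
The proof rests on two basic variation formulas for the pullback operators. Setting $w=\partial_s\zeta(t,s)|_{s=0}$ and $u=\Re_{\zeta^{-1}}w$ (so that $w=u\circ \zeta$), the chain rule gives
\begin{align*}
\partial_s\Re_{\zeta(s)}\big|_{s=0}=\Re_{\zeta}(u\cdot\nb),
\end{align*}
and differentiating the trivial identity $\Re_{\zeta(s)}\Re_{\zeta(s)^{-1}}=I$ at $s=0$ yields
\begin{align*}
\partial_s\Re_{\zeta(s)^{-1}}\big|_{s=0}=-(u\cdot\nb)\Re_{\zeta^{-1}}.
\end{align*}

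With these two formulas in hand, items (1) and (2) drop out immediately by applying the product rule to $\nb_{\zeta(s)}=\Re_{\zeta(s)}\nb\Re_{\zeta(s)^{-1}}$ and to $K(\zeta(s))=\Re_{\zeta(s)}\Delta\Re_{\zeta(s)^{-1}}$, respectively. The two contributions from the outer factors combine cleanly into $\Re_\zeta[u\cdot\nb,P]\Re_{\zeta^{-1}}=\lsi[{[u\cdot\nb,P]}]{\zeta}$ with $P=\nb$ or $\Delta$, and no extra terms appear because the inner differential operator $P$ is $s$-independent as a symbol (it acts on functions defined on the moving domain $\zeta(s)(\Omega)$ without reference to $s$).

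Item (3) is the delicate one. A direct product rule applied to $\Re_{\zeta(s)}\Delta^{-1}\Re_{\zeta(s)^{-1}}$ would produce only $\lsi[{[u\cdot\nb,\Delta^{-1}]}]{\zeta}$ and entirely miss the harmonic correction, because the inner operator $\Delta^{-1}$ secretly depends on $s$: it is the Dirichlet inverse on the moving domain $\zeta(s)(\Omega)$, not an $s$-independent symbol. The right route is algebraic: differentiate $K(\zeta(s))K(\zeta(s))^{-1}=I$ at $s=0$ and use item (2) to get
\begin{align*}
K(\zeta)\,\partial_s K(\zeta(s))^{-1}|_{s=0}=-\lsi[{[u\cdot\nb,\Delta]}]{\zeta}K(\zeta)^{-1},
\end{align*}
apply $K(\zeta)^{-1}$ from the left, and expand using $K(\zeta)^{-1}=\Re_\zeta\Delta^{-1}\Re_{\zeta^{-1}}$. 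The crucial step is the asymmetry $\Delta\Delta^{-1}=I$ versus $\Delta^{-1}\Delta=I-\h$ on $C^\infty(\Omega)$, which gives the algebraic identity
\begin{align*}
\Delta^{-1}[u\cdot\nb,\Delta]\Delta^{-1}=-[u\cdot\nb,\Delta^{-1}]+\h(u\cdot\nb)\Delta^{-1}.
\end{align*}
After conjugation by $\Re_\zeta$ this is precisely the stated formula. The potential ambiguity of a summand in $\ker K(\zeta)$ is eliminated by noting that $\partial_s K(\zeta(s))^{-1}$ must send $C^\infty(\Omega)$ into $C_0^\infty(\Omega)$ (since $K(\zeta(s))^{-1}f$ vanishes on $\D\Omega$ for every $s$), and the only element of $\ker K(\zeta)$ vanishing on $\D\Omega$ is zero by uniqueness for the Dirichlet problem.

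The main obstacle is exactly the harmonic correction $-\h(u\cdot\nb)\Delta^{-1}$ in (3): it has no analogue in items (1) or (2) because $\nb$ and $\Delta$ are genuinely $s$-independent, whereas the appearance of $\Delta^{-1}$ smuggles in hidden $s$-dependence through the Dirichlet problem on a moving domain. Recognizing that the correction originates from $\Delta^{-1}\Delta=I-\h$ rather than from any formal commutator, and that the $K K^{-1}=I$ route (rather than the formal product rule on $\Re_\zeta\Delta^{-1}\Re_{\zeta^{-1}}$) is the correct vehicle to surface it, is the key insight in the proof.
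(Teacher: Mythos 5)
Your proof is correct, and the key algebraic identity
\begin{align*}
-\Delta^{-1}[u\cdot\nb,\Delta]\Delta^{-1}=[u\cdot\nb,\Delta^{-1}]-\h(u\cdot\nb)\Delta^{-1}
\end{align*}
is verified exactly as you say, using $\Delta\Delta^{-1}=I$ and $\Delta^{-1}\Delta=I-\h$. The paper itself does not supply a proof of this lemma --- it is recalled verbatim from Ebin's paper --- so there is no in-text argument to compare against; your route via differentiating $K(\zeta(s))K(\zeta(s))^{-1}=I$, left-multiplying by $K(\zeta)^{-1}$, and noting that $(I-\h(\zeta))$ acts as the identity on the range of $\partial_s K(\zeta(s))^{-1}|_{s=0}$ (which lands in $C_0^\infty(\Omega)$ because $K(\zeta(s))^{-1}f$ vanishes on $\D\Omega$ for every $s$) is precisely the standard mechanism, and your diagnosis of why the naive product rule on $\Re_{\zeta(s)}\Delta^{-1}\Re_{\zeta(s)^{-1}}$ misses the harmonic correction is the right insight. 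One small remark: the pointwise derivative formulas $\partial_s\Re_{\zeta(s)}|_{s=0}=\Re_\zeta(u\cdot\nb)$ and $\partial_s\Re_{\zeta(s)^{-1}}|_{s=0}=-(u\cdot\nb)\Re_{\zeta^{-1}}$ involve differentiating pullbacks whose domains move with $s$; it is cleaner, and what makes the computation fully rigorous, to observe that the composite operators such as $K(\zeta(s))$ and $\lsi[\nb]{\zeta(s)}$ all act between the \emph{fixed} spaces $C_0^\infty(\Omega)$ and $C^\infty(\Omega)$, so the $s$-derivative of the whole conjugated operator is well defined and the two-term split is just formal bookkeeping whose end result is unambiguous.
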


As in \cite[(4.11)]{Ebin}, we have
\begin{align}\label{Dstr}
\D_s\tr(\lsi[D]{\zeta}\dot{\zeta})^2|_{s=0}=&2\tr\left(-(Du\circ\zeta)((D\zeta)^{-1}D\dot{\zeta})^2+(D\zeta)^{-1}D\dot{w}(D\zeta)^{-1}D\dot{\zeta}\right).
\end{align}

Since
\begin{align*}
&\D_s(\zeta^j_{,kl} \Re_\zeta((\zeta^{-1})^l_{,i}H^k_0))\\
=&\D_s\zeta^j_{,kl} \Re_\zeta((\zeta^{-1})^l_{,i}H^k_0)+\zeta^j_{,kl} \D_s(\zeta^{-1})^l_{,i}(\zeta) H^k_0(\zeta) +\zeta^j_{,kl} (\zeta^{-1})^l_{,i}(\zeta)\D_s H^k_0(\zeta)\\
=&\D_s\zeta^j_{,kl} \Re_\zeta((\zeta^{-1})^l_{,i}H^k_0)-\zeta^j_{,kl} (\zeta^{-1})^l_{,n}\D_s\zeta^n_{,m}(\zeta^{-1})^m_{,i} H^k_0(\zeta)+\zeta^j_{,kl} (\zeta^{-1})^l_{,i}  H^k_{0,m}(\zeta)\D_s\zeta^m,
\end{align*}
it follows that
\begin{align*}
&\D_s(\zeta^j_{,kl} \Re_\zeta((\zeta^{-1})^l_{,i}H^k_0))|_{s=0}\\
=&w^j_{,kl} \Re_\zeta((\zeta^{-1})^l_{,i}H^k_0)-\zeta^j_{,kl} (\zeta^{-1})^l_{,n}w^n_{,m}(\zeta^{-1})^m_{,i} H^k_0(\zeta)+\zeta^j_{,kl} (\zeta^{-1})^l_{,i}  H^k_{0,m}(\zeta)w^m,
\end{align*}
and
\begin{align*}
&\D_s\tr(\zeta^j_{,kl} \Re_\zeta((\zeta^{-1})^l_{,i}H^k_0))^2|_{s=0}\\
=&2\zeta^i_{,k'l'} \Re_\zeta((\zeta^{-1})^{l'}_{,j}H^{k'}_0)\Big(w^j_{,kl} \Re_\zeta((\zeta^{-1})^l_{,i}H^k_0)-\zeta^j_{,kl} (\zeta^{-1})^l_{,n}w^n_{,m}(\zeta^{-1})^m_{,i} H^k_0(\zeta)\\
&\qquad\qquad +\zeta^j_{,kl} (\zeta^{-1})^l_{,i}  H^k_{0,m}(\zeta)w^m \Big)\\
=&2\zeta^i_{,k'l'} \Re_\zeta((\zeta^{-1})^{l'}_{,j}H^{k'}_0)w^j_{,kl} \Re_\zeta((\zeta^{-1})^l_{,i}H^k_0)-2\zeta^i_{,k'l'}\zeta^j_{,kl} \Re_\zeta((\zeta^{-1})^{l'}_{,j}H^{k'}_0 (\zeta^{-1})^l_{,n}u^n_{,i} H^k_0)\\
&+2\zeta^i_{,k'l'} \zeta^j_{,kl}\Re_\zeta((\zeta^{-1})^{l'}_{,j}H^{k'}_0 (\zeta^{-1})^l_{,i}  H^k_{0,m}u^m).
\end{align*}

Due to
\begin{align*}
\D_s(\zeta^j_{,kl}\Re_\zeta(H^k_0H^l_0)_{,i})|_{s=0}= & \D_s\zeta^j_{,kl}\Re_\zeta(H^k_0H^l_0)_{,i}|_{s=0}+\zeta^j_{,kl}\Re_\zeta(H^k_0H^l_0)_{,im}\D_s\zeta^m|_{s=0}\\
=&w^j_{,kl}\Re_\zeta(H^k_0H^l_0)_{,i}+\zeta^j_{,kl}\Re_\zeta(H^k_0H^l_0)_{,im}w^m,
\end{align*}
we get
\begin{align*}
&\D_s\tr(\zeta^j_{,kl}\Re_\zeta(H^k_0H^l_0)_{,i})|_{s=0}
=w^i_{,kl}\Re_\zeta(H^k_0H^l_0)_{,i}+\zeta^i_{,kl}\Re_\zeta((H^k_0H^l_0)_{,im}u^m).
\end{align*}

Since
\begin{align*}
\D_s(\zeta^j_{,k}\Re_\zeta H^k_{0,i})|_{s=0}=&\D_s\zeta^j_{,k}\Re_\zeta H^k_{0,i}|_{s=0}+\zeta^j_{,k}H^k_{0,il}\D_s\zeta^l|_{s=0}\\
=&w^j_{,k}\Re_\zeta H^k_{0,i}+\zeta^j_{,k}\Re_\zeta(H^k_{0,il}u^l),
\end{align*}
it yields
\begin{align*}
\D_s\tr(\zeta^j_{,k}\Re_\zeta H^k_{0,i})^2|_{s=0}=2\zeta^i_{,k}w^j_{,k}\Re_\zeta( H^k_{0,j} H^k_{0,i})+2\zeta^i_{,k}\zeta^j_{,k}\Re_\zeta (H^k_{0,j}H^k_{0,il}u^l).
\end{align*}

Now considering \eqref{eqn1} as an equation in $\zeta(t,s)$ and applying $\D_s|_{s=0}$ to obtain
\begin{align*}
\ddot{w}=&(\D_s\lsi[\nb]{\zeta(t,s)})\lsi[\Delta]{\zeta}^{-1}\tr(\lsi[D]{\zeta} \dot{\zeta})^2+\lsi[\nb]{\zeta}\D_s(K(\zeta(t,s))^{-1})\tr(\lsi[D]{\zeta} \dot{\zeta})^2\\
&+\lsi[\nb]{\zeta}\lsi[\Delta]{\zeta}^{-1}\D_s\tr(\lsi[D]{\zeta(t,s)} \dot{\zeta}(t,s))^2-(\D_s\lsi[\nb]{\zeta(t,s)})\lsi[\Delta]{\zeta}^{-1}\tr(\zeta^j_{,kl} \Re_\zeta((\zeta^{-1})^l_{,i}H^k_0))^2\\
&-\lsi[\nb]{\zeta}\D_s(K(\zeta(t,s))^{-1})\tr(\zeta^j_{,kl} \Re_\zeta((\zeta^{-1})^l_{,i}H^k_0))^2\\
&-\lsi[\nb]{\zeta}\lsi[\Delta]{\zeta}^{-1}\D_s\tr(\zeta(t,s)^j_{,kl} \Re_{\zeta(t,s)}((\zeta(t,s)^{-1})^l_{,i}H^k_0))^2\\
&-(\D_s\lsi[\nb]{\zeta(t,s)})\lsi[\Delta]{\zeta}^{-1}\tr(\zeta^j_{,kl}\Re_\zeta(H^k_0H^l_0)_{,i})-\lsi[\nb]{\zeta}\D_s(K(\zeta(t,s))^{-1})\tr(\zeta^j_{,kl}\Re_\zeta(H^k_0H^l_0)_{,i})\\
&-\lsi[\nb]{\zeta}\lsi[\Delta]{\zeta}^{-1}\D_s\tr(\zeta(t,s)^j_{,kl}\Re_{\zeta(t,s)}(H^k_0H^l_0)_{,i})-(\D_s\lsi[\nb]{\zeta(t,s)})\lsi[\Delta]{\zeta}^{-1}\tr(\zeta^j_{,k}\Re_\zeta H^k_{0,i})^2\\
&-\lsi[\nb]{\zeta}\D_s(K(\zeta(t,s))^{-1})\tr(\zeta^j_{,k}\Re_\zeta H^k_{0,i})^2-\lsi[\nb]{\zeta}\lsi[\Delta]{\zeta}^{-1}\D_s\tr(\zeta(t,s)^j_{,k}\Re_{\zeta(t,s)} H^k_{0,i})^2\\
&+ \zeta_{,nm}\Re_\zeta((\zeta^{-1})^m_{,l} H^l_0H^n_0)+ \zeta_{,n}\Re_\zeta(H^n_{0,l}H^l_0).
\end{align*}

It yields from Lemma \ref{lem.1}
\begin{align}\label{wzeta}
\ddot{w}=&\lsi[{[u\cdot\nb,\nb]}]{\zeta}\lsi[\Delta]{\zeta}^{-1}\tr(\lsi[D]{\zeta} \dot{\zeta})^2+\lsi[\nb]{\zeta}\lsi[{([u\cdot\nb, \Delta^{-1}]-\h(u\cdot\nb)\Delta^{-1})}]{\zeta}\tr(\lsi[D]{\zeta} \dot{\zeta})^2\no\\
&+\lsi[\nb]{\zeta}\lsi[\Delta]{\zeta}^{-1}\left(2\tr\left(-(Du\circ\zeta)((D\zeta)^{-1}D\dot{\zeta})^2+(D\zeta)^{-1}D\dot{w}(D\zeta)^{-1}D\dot{\zeta}\right)\right)\no\\
&-\lsi[{[u\cdot\nb,\nb]}]{\zeta}\lsi[\Delta]{\zeta}^{-1}\tr(\zeta^j_{,kl} \Re_\zeta((\zeta^{-1})^l_{,i}H^k_0))^2\no\\
&-\lsi[\nb]{\zeta}\lsi[{([u\cdot\nb, \Delta^{-1}]-\h(u\cdot\nb)\Delta^{-1})}]{\zeta}\tr(\zeta^j_{,kl} \Re_\zeta((\zeta^{-1})^l_{,i}H^k_0))^2\no\\
&-\lsi[\nb]{\zeta}\lsi[\Delta]{\zeta}^{-1}(2\zeta^i_{,k'l'} \Re_\zeta((\zeta^{-1})^{l'}_{,j}H^{k'}_0)w^j_{,kl} \Re_\zeta((\zeta^{-1})^l_{,i}H^k_0))\no\\
&+\lsi[\nb]{\zeta}\lsi[\Delta]{\zeta}^{-1}(2\zeta^i_{,k'l'}\zeta^j_{,kl} \Re_\zeta((\zeta^{-1})^{l'}_{,j}H^{k'}_0 (\zeta^{-1})^l_{,n}u^n_{,i} H^k_0))\no\\
&-\lsi[\nb]{\zeta}\lsi[\Delta]{\zeta}^{-1}(2\zeta^i_{,k'l'} \zeta^j_{,kl}\Re_\zeta((\zeta^{-1})^{l'}_{,j}H^{k'}_0 (\zeta^{-1})^l_{,i}  H^k_{0,m}u^m))\no\\
&-\lsi[{[u\cdot\nb,\nb]}]{\zeta}\lsi[\Delta]{\zeta}^{-1}(\zeta^j_{,kl}\Re_\zeta(H^k_0H^l_0)_{,j})\no\\
&-\lsi[\nb]{\zeta}\lsi[{([u\cdot\nb, \Delta^{-1}]-\h(u\cdot\nb)\Delta^{-1})}]{\zeta}(\zeta^j_{,kl}\Re_\zeta(H^k_0H^l_0)_{,j})\no\\
&-\lsi[\nb]{\zeta}\lsi[\Delta]{\zeta}^{-1}(w^i_{,kl}\Re_\zeta(H^k_0H^l_0)_{,i})-\lsi[\nb]{\zeta}\lsi[\Delta]{\zeta}^{-1}(\zeta^i_{,kl}\Re_\zeta((H^k_0H^l_0)_{,im}u^m))\no\\
&-\lsi[{[u\cdot\nb,\nb]}]{\zeta}\lsi[\Delta]{\zeta}^{-1}\tr(\zeta^j_{,k}\Re_\zeta H^k_{0,i})^2\no\\
&-\lsi[\nb]{\zeta}\lsi[{([u\cdot\nb, \Delta^{-1}]-\h(u\cdot\nb)\Delta^{-1})}]{\zeta}\tr(\zeta^j_{,k}\Re_\zeta H^k_{0,i})^2\no\\
&-\lsi[\nb]{\zeta}\lsi[\Delta]{\zeta}^{-1}(2\zeta^i_{,k'}w^j_{,k}\Re_\zeta( H^{k'}_{0,j} H^k_{0,i}))-\lsi[\nb]{\zeta}\lsi[\Delta]{\zeta}^{-1}(2\zeta^i_{,k'}\zeta^j_{,k}\Re_\zeta (H^{k'}_{0,j}H^k_{0,il}u^l))\no\\
&+ \zeta_{,nm}\Re_\zeta((\zeta^{-1})^m_{,l} H^l_0H^n_0)+ \zeta_{,n}\Re_\zeta(H^n_{0,l}H^l_0)\no\\
=&\lsi[{[u\cdot\nb,\nb\Delta^{-1}]}]{\zeta}\tr(\lsi[D]{\zeta} \dot{\zeta})^2-\lsi[\nb]{\zeta}\lsi[{(\h(u\cdot\nb)\Delta^{-1})}]{\zeta}\tr(\lsi[D]{\zeta} \dot{\zeta})^2\no\\
&+\lsi[\nb]{\zeta}\lsi[\Delta]{\zeta}^{-1}\left(2\tr\left(-(Du\circ\zeta)((D\zeta)^{-1}D\dot{\zeta})^2+(D\zeta)^{-1}D\dot{w}(D\zeta)^{-1}D\dot{\zeta}\right)\right)\no\\
&-\lsi[{[u\cdot\nb,\nb\Delta^{-1}]}]{\zeta}\tr(\zeta^j_{,kl} \Re_\zeta((\zeta^{-1})^l_{,i}H^k_0))^2\no\\
&+\lsi[\nb]{\zeta}\lsi[{(\h(u\cdot\nb)\Delta^{-1})}]{\zeta}\tr(\zeta^j_{,kl} \Re_\zeta((\zeta^{-1})^l_{,i}H^k_0))^2\no\\
&-\lsi[\nb]{\zeta}\lsi[\Delta]{\zeta}^{-1}(2\zeta^i_{,k'l'} \Re_\zeta((\zeta^{-1})^{l'}_{,j}H^{k'}_0)w^j_{,kl} \Re_\zeta((\zeta^{-1})^l_{,i}H^k_0))\no\\
&+\lsi[\nb]{\zeta}\lsi[\Delta]{\zeta}^{-1}(2\zeta^i_{,k'l'}\zeta^j_{,kl} \Re_\zeta((\zeta^{-1})^{l'}_{,j}H^{k'}_0 (\zeta^{-1})^l_{,n}u^n_{,i} H^k_0))\no\\
&-\lsi[\nb]{\zeta}\lsi[\Delta]{\zeta}^{-1}(2\zeta^i_{,k'l'} \zeta^j_{,kl}\Re_\zeta((\zeta^{-1})^{l'}_{,j}H^{k'}_0 (\zeta^{-1})^l_{,i}  H^k_{0,m}u^m))\no\\
&-\lsi[{[u\cdot\nb,\nb\Delta^{-1}]}]{\zeta}(\zeta^j_{,kl}\Re_\zeta(H^k_0H^l_0)_{,j})+\lsi[\nb]{\zeta}\lsi[{(\h(u\cdot\nb)\Delta^{-1})}]{\zeta}(\zeta^j_{,kl}\Re_\zeta(H^k_0H^l_0)_{,j})\no\\
&-\lsi[\nb]{\zeta}\lsi[\Delta]{\zeta}^{-1}(w^i_{,kl}\Re_\zeta(H^k_0H^l_0)_{,i})-\lsi[\nb]{\zeta}\lsi[\Delta]{\zeta}^{-1}(\zeta^i_{,kl}\Re_\zeta((H^k_0H^l_0)_{,im}u^m))\no\\
&-\lsi[{[u\cdot\nb,\nb\Delta^{-1}]}]{\zeta}\tr(\zeta^j_{,k}\Re_\zeta H^k_{0,i})^2+\lsi[\nb]{\zeta}\lsi[{(\h(u\cdot\nb)\Delta^{-1})}]{\zeta}\tr(\zeta^j_{,k}\Re_\zeta H^k_{0,i})^2\no\\
&-\lsi[\nb]{\zeta}\lsi[\Delta]{\zeta}^{-1}(2\zeta^i_{,k'}w^j_{,k}\Re_\zeta( H^{k'}_{0,j} H^k_{0,i}))
-\lsi[\nb]{\zeta}\lsi[\Delta]{\zeta}^{-1}(2\zeta^i_{,k'}\zeta^j_{,k}\Re_\zeta (H^{k'}_{0,j}H^k_{0,il}u^l))\no\\
&+ \zeta_{,nm}\Re_\zeta((\zeta^{-1})^m_{,l} H^l_0H^n_0)+ \zeta_{,n}\Re_\zeta(H^n_{0,l}H^l_0).
\end{align}

For our choice of $\eta(t)$ in place of $\zeta(t)$ we can compute explicitly some  terms in the linearized equation.  To do so, let us return to complex variables for convenience.  We know that $D\eta(t,z)=e^{\im t}$ for $\eta$ given in  section \ref{sec.examp} so that $(D\eta)^{-1}D\dot{\eta}=\im$ and $\tr(-(Du\circ\eta)(-1))=\dv u=0$. Also,  $\tr(\etaD[D] \dot{\eta})^2=\tr((D \vv)^2)\circ\eta=-2$. Due to $\eta^i_{,j}=e^{\im t}\delta^i_j$, $(\eta^{-1})^i_{,j}=e^{-\im t}\delta^i_j$ and $H^n_{0,l}=(l-n)b$ ($l,n\in\{1,2\}$). Therefore,  any second-order spatial derivatives of both $\eta$, $\eta^{-1}$ and $H_0$ are zeros. Hence $\tr(\eta^j_{,k}\Re_\eta H^k_{0,i})^2=-2b^2e^{2\im t}$ and $\eta_{,n}\Re_\eta(H^n_{0,l}H^l_0)=b^2e^{\im t}\eta$. Therefore, it turns out from \eqref{wzeta}
\begin{subequations}\label{weta}
	\begin{align}
	\ddot{w}=&(1-b^2)\big(\lsi[{[u\cdot\nb,\nb\Delta^{-1}]}]{\eta}(-2)-\lsi[\nb]{\eta}\lsi[{(\h(u\cdot\nb)\Delta^{-1})}]{\eta}(-2)\big)\label{weta.1}\\
	&+\lsi[\nb]{\eta}\lsi[\Delta]{\eta}^{-1}\left(2\tr\left(Du\circ\eta+\im e^{-\im t}D\dot{w}\right)\right)\label{weta.2}\\
	&-\lsi[\nb]{\eta}\lsi[\Delta]{\eta}^{-1}(w^i_{,kl}\Re_\eta(H^k_0H^l_0)_{,i})\label{weta.3}\\
	&-e^{\im t}\lsi[\nb]{\eta}\lsi[\Delta]{\eta}^{-1}(2w^j_{,k}\Re_\eta( H^{i}_{0,j} H^k_{0,i}))\label{weta.4}\\
	&+b^2 e^{\im t}\eta.\label{weta.5}
	\end{align}
\end{subequations}

Now we restrict our attention to special solutions of \eqref{weta}. Assume that $w=\nb f(t)\circ\eta$ where $f(t)$ is a harmonic function on $\Omega$. Thus, $u=\nb f$ a harmonic gradient. Then $Dw=D\eta(D\nb f)\circ \eta$ and $\tr(\im e^{-\im t}D\dot{w})=0$ as similar as in \cite[Eq. (4.16)]{Ebin}, so \eqref{weta.2} vanishes. We also have
\begin{align*}
w^j_{,k}=&\D_l\D_jf(t,\eta(t,x))\eta^l_{,k}(t,x)=e^{\im t}\D_k\D_jf(t,\eta(t,x)),\\
w^i_{,kl}=&e^{\im t}\D_m\D_k\D_if(t,\eta(t,x))\eta^m_{,l}=e^{2\im t}\D_l\D_k\D_if(t,\eta(t,x)),
\end{align*}
and then
\begin{align}
w^i_{,kl}\Re_\eta(H^k_0H^l_0)_{,i}=&2b^2e^{2\im t}\sum_{i,k=1}^2\D_l\D_k\D_if(t,\eta(t,x))((i-k)\im \eta^l)\no\\
=&2b^2\im e^{3\im t}\sum_{i,k=1}^2x^l\D_l\D_k\D_if(t,\eta(t,x))(i-k)\no\\
=&0,\label{wH.1}\\
\intertext{and}
w^j_{,k}\Re_\eta( H^{i}_{0,j} H^k_{0,i})=&b^2e^{\im t}\sum_{i,j,k=1}^2\D_k\D_jf(t,\eta(t,x))(j-i)(i-k)\no\\
=&-b^2e^{\im t}\sum_{i,k=1}^2\D_k^2f(t,\eta(t,x))(i-k)^2\no\\
=&-b^2e^{\im t}(\D_{2}^2f(t,\eta(t,x))+\D_{1}^2f(t,\eta(t,x)))\no\\
=&-b^2e^{\im t}\Delta f\circ\eta=0.\label{wH.2}
\end{align}
Thus, both \eqref{weta.3} and \eqref{weta.4} vanish. Hence,  similar to the discussion  in \cite{Ebin}, 
\begin{align}\label{eqf}
(\nb f\circ\eta)^{\cdot\cdot}=&(1-b^2)x\cdot\nb(\nb f\circ\eta)+ b^2 e^{\im t}\eta,
\end{align}
where $x\cdot\nb$ is the radial derivative and commutes with composition with the rotation $\eta$. Now, we recall a useful lemma.

\begin{lemma}[\mbox{\cite[Lemma 4.19]{Ebin}}] \label{lem.4.19}
	If $f$ is harmonic, and $\eta(t,z)=e^{\im t}z$, then there exists a harmonic function $g$ such that $\nb f\circ \eta=\nb g$.
\end{lemma}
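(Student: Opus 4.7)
The plan is to reduce the statement to two elementary facts in $2$D: (i) rotations preserve harmonicity, and (ii) if we rotate the gradient of a harmonic function by the same rotation, we again get a curl-free and divergence-free vector field. Since $\Omega$ is the unit disk (simply connected), any curl-free field is a gradient, and a divergence-free gradient is the gradient of a harmonic function.

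\textbf{Step 1: Pull back and rotate.} Write $\eta(t,x) = Rx$ with the rotation matrix
\[
R = \begin{pmatrix} \cos t & -\sin t \\ \sin t & \cos t \end{pmatrix}.
\]
Set $u(x) := f(\eta(t,x)) = f(Rx)$. Because $f$ is harmonic and $R$ is an isometry, $u$ is also harmonic on $\Omega$. By the chain rule $\nb u(x) = R^{T}(\nb f)(Rx)$, i.e.\ $(\nb f)\circ \eta = R\,\nb u$.

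\textbf{Step 2: Show $R\,\nb u$ is a gradient of a harmonic function.} I would directly compute the scalar curl and divergence of the vector field $V := R\,\nb u$. A short calculation using $u_{,11}+u_{,22} = \Delta u = 0$ gives
\[
\partial_{1} V^{2} - \partial_{2} V^{1} = \sin t\,(u_{,11}+u_{,22}) = 0, \qquad
\partial_{1} V^{1} + \partial_{2} V^{2} = \cos t\,(u_{,11}+u_{,22}) = 0.
\]
Since $\Omega$ is simply connected, $\curl V = 0$ yields the existence of $g\in C^{\infty}(\Omega)$ with $\nb g = V = (\nb f)\circ \eta$, and $\dv V = 0$ then gives $\Delta g = 0$.

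\textbf{Remark on alternative route.} One could alternatively argue via complex analysis: write $f = \RE \phi$ with $\phi$ holomorphic on $\Omega$, identify $\nb f$ with $\overline{\phi'(z)}$ in the usual way, and then verify that the holomorphic function $\psi(z) := e^{-\im t}\phi(e^{\im t}z)$ satisfies $\overline{\psi'(z)} = \overline{\phi'(e^{\im t}z)} = (\nb f)\circ\eta$, so that $g := \RE \psi$ does the job. This gives an explicit formula for $g$, which is pleasant, but the real-variable argument in Steps 1--2 is shorter and uses only simple connectedness plus two one-line identities.

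\textbf{Main obstacle.} There is essentially no obstacle beyond bookkeeping; the only nontrivial input is that $\Omega$ is simply connected, which is built into our setup $\Omega = B_{1}(0)$. The key structural point, which one should not take for granted, is that rotating a harmonic gradient by the \emph{same} angle (rather than, say, translating the argument) preserves both closedness and co-closedness simultaneously—this is exactly what the computation in Step 2 verifies and is specific to the $2$D rotation group acting on gradients of harmonic functions.
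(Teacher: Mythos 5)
Your proof is correct, and it is self-contained where the paper is not: the paper merely cites \cite[Lemma~4.19]{Ebin} without reproducing the argument, so there is no internal proof to compare against.

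The real-variable computation in Step~2 checks out. Writing $V=R\nabla u$ with
$V^1=\cos t\,u_{,1}-\sin t\,u_{,2}$ and $V^2=\sin t\,u_{,1}+\cos t\,u_{,2}$, one gets
$\partial_1 V^2-\partial_2 V^1=\sin t\,\Delta u=0$ and $\partial_1 V^1+\partial_2 V^2=\cos t\,\Delta u=0$,
and simple connectedness of $B_1(0)$ then furnishes the harmonic potential $g$. You correctly flag that the cancellation hinges on the arguments of $f$ being rotated by the \emph{same} matrix $R$ that multiplies $\nabla u$; this is the special role of the $2$D rotation group acting on harmonic gradients.

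Your complex-variable remark is, in fact, the route most consonant with this paper (and with Ebin's original), which systematically identifies harmonic gradients with conjugate-holomorphic functions via $\nabla(\RE z^n)=n\bar z^{\,n-1}$: under $\nabla f\leftrightarrow\overline{\phi'(z)}$, composition with $z\mapsto e^{\im t}z$ sends $\overline{\phi'(z)}$ to $\overline{\phi'(e^{\im t}z)}=\overline{\psi'(z)}$ with $\psi(z)=e^{-\im t}\phi(e^{\im t}z)$, giving $g=\RE\psi$ explicitly. That version also makes the basis computation used later in the paper (e.g.\ $\A\nabla g=(n-1)\nabla g$ on $\bar z^{\,n-1}$) immediate, whereas your real-variable version has the advantage of using only that the domain is simply connected. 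Both are correct and essentially one-line arguments once set up.
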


Since $f$ is harmonic,  by Lemma \ref{lem.4.19},  there exists a harmonic function $g$ such that $\nb f\circ \eta=\nb g$. Thus, \eqref{eqf} can be written as
\begin{align}\label{eqg}
(\nb g)^{\cdot\cdot}=(1-b^2)x\cdot\nb(\nb g)+b^2e^{2\im t}z.
\end{align}

Define $\A$ by $\A\nb g=(x\cdot\nb)\nb g$. Let $g(z)=\RE z^n$ ($n\gs 1$), $\nb g=n\bar{z}^{n-1}$. Then $\A\nb g=nz\cdot\nb \bar{z}^{n-1}=n(n-1)\bar{z}^{n-1}=(n-1)\nb g$. Similarly, if $g(z)=\RE \im z^n$ ($n\gs 1$), $\nb g=-\im n\bar{z}^{n-1}$. Then $\A\nb g=(n-1)\nb g$. Note that $E:=\{n\bar{z}^{n-1},\im n\bar{z}^{n-1}\}_{n=1}^\infty$ form a basis of the set of Harmonic gradients on $\Omega$. So $\A$ has this set as a complete set of eigenfunctions and has double eigenvalues $0,1,2,\cdots$. We seek the solutions by  separated variables of the form $\nb g(t,z)=\sigma(t)h(z)$ for $h\in E$, so that \eqref{eqg} can be written as
\begin{align}\label{eqt}
\ddot{\sigma}(t)=(1-b^2)(n-1)\sigma(t)+b^2e^{2\im t}z/h(z).
\end{align}
Let $B=\sqrt{(1-b^2)(n-1)}$. The usual solution to \eqref{eqt} can be written as 
\begin{align*}
\sigma(t)=&C_1e^{Bt}+C_2e^{-Bt}+\int_0^t\frac{e^{Bs}e^{-Bt}-e^{Bt}e^{-Bs}}{-Be^{Bs}e^{-Bs}-Be^{Bs}e^{-Bs}}b^2e^{2\im s}z/h(z)ds\\
=&C_1e^{Bt}+C_2e^{-Bt}-\frac{b^2z}{2Bh(z)}\int_0^t(e^{Bs}e^{-Bt}-e^{Bt}e^{-Bs})e^{2\im s}ds\\
=&C_1e^{Bt}+C_2e^{-Bt}-\frac{b^2z}{2Bh(z)}\left(\frac{e^{2\im t}-e^{-Bt}}{B+2\im}+\frac{e^{2\im t}-e^{Bt}}{B-2\im} \right)\\
=&C_1e^{Bt}+C_2e^{-Bt}-\frac{b^2z}{2Bh(z)}\left(\frac{2B}{B^2+4}e^{2\im t}-\frac{e^{-Bt}}{B+2\im}-\frac{e^{Bt}}{B-2\im} \right).
\end{align*}
Hence, we get the solution of \eqref{eqg} of the form
\begin{align*}
w_n(t,z)=C_1e^{Bt}n\bar{z}^{n-1}+C_2e^{-Bt}n\bar{z}^{n-1}-\frac{b^2z}{2B}\left(\frac{2B}{B^2+4}e^{2\im t}-\frac{e^{-Bt}}{B+2\im}-\frac{e^{Bt}}{B-2\im} \right).
\end{align*}
If we assume the initial conditions $w_n(0)=0$ and $\dot{w}_n(0)=e^{-n^{1/4}}\bar{z}^n$, then 
\begin{align*}
C_1+C_2=0, \text{ and } (C_1-C_2)Bn\bar{z}^{n-1}=e^{-n^{1/4}}\bar{z}^n,
\end{align*}
i.e., $C_1=-C_2=e^{-n^{1/4}}\bar{z}/(2Bn)$. Hence, for $n\gs 2$,
\begin{align}\label{soln}
w_n(t)=&\frac{1}{\sqrt{(1-b^2)(n-1)}}e^{-n^{1/4}}\sinh(\sqrt{(1-b^2)(n-1)}t)\bar{z}^{n}\no\\
&+\frac{b^2z}{(1-b^2)(n-1)+4}\left(\cosh(\sqrt{(1-b^2)(n-1)}t)-e^{2\im t}\right)\no\\
&+\frac{2b^2\im z\sinh(\sqrt{(1-b^2)(n-1)}t)}{\sqrt{(1-b^2)(n-1)}((1-b^2)(n-1)+4)},
\end{align}
is a sequence of solutions to \eqref{weta} with initial conditions $w_n(0)=0$ and $\dot{w}_n(0)=e^{-n^{1/4}}\bar{z}^n$. When $b^2<1$, this sequence is as useful as for the Euler equation, discussed in \cite{Ebin}, because the initial data go to zero in $C^\infty(\Omega)$, but for any $t>0$, in view of  the exponential growth of $\sinh$ and $\cosh$, 
$\{w_n(t)\}_{n=2}^\infty$ is an unbounded sequence in $C^\infty(\Omega)$. 

\section{Discontinuous dependence on initial data}

In this section, we will show  that the problem \eqref{eq.eta} is ill-posed. 

\subsection{Construction of the sequences of initial data and solutions}

Let $\eta(t,z)=e^{\im t}z$, the solution to \eqref{eq.eta} given in section \ref{sec.examp}, and set $\zeta_n(0,z)=\eta(0,z)=z$ and $\dot{\zeta}_n(0,z)=\dot{\eta}(0,z)+e^{-n^{1/4}}\bar{z}^n$. Then, $(\zeta_n(0,z),\dot{\zeta}_n(0,z))\to (\eta(0,z),\dot{\eta}(0,z))$ in $C^\infty(\Omega)\times C^\infty(\Omega)$ as $n\to\infty$.  Assuming that there exists some positive $T$ such that for all $n$, $\zeta_n(t)$ is the unique solution of \eqref{eq.eta} for $0\ls t\ls T$, the goal is to show that 
$\zeta_n(t)$ does not converge to $\eta(t)$, not in $C^\infty(\Omega)$ for any positive $t\ls T$.

For this,  we define
\begin{align*}
y_n(t)=\zeta_n(t)-\eta(t).
\end{align*}
We will show that $y_n(t)$ grows like $w_n(t)$ of \eqref{soln} and thus, $\lim_{n\to\infty}(\zeta_n(t)-\eta(t))$ is not zero for $t>0$.

Since both $\eta(t)$ and $\zeta_n(t)$ satisfy \eqref{eqn}, we have
\begin{align*}
\ddot{\zeta}-\ddot{\eta}=&Z(\zeta,\dot{\zeta})-Z(\eta,\dot{\eta})=\int_0^1 Z_{,j}(\zeta(s),\dot{\zeta}(s))(\zeta-\eta,\dot{\zeta}-\dot{\eta})^jds\no\\
=:&\int_0^1 DZ(\zeta(s),\dot{\zeta}(s))(\zeta-\eta,\dot{\zeta}-\dot{\eta})ds,
\end{align*}
where $\zeta(s)=\zeta+s(\eta-\zeta)$.  Hereafter we suppress the subscript ``$n$'' in $\zeta$ for simplicity. Thus, by the mean value theorem of integrals,
\begin{align}
\ddot{y}(t)=&\int_0^1 Z_{,j}(\zeta(s),\dot{\zeta}(s))(y,\dot{y})^jds\no\\
=&Z_{,j}(\eta,\dot{\eta})(y,\dot{y})^j+\int_0^1(Z_{,j}(\zeta(s),\dot{\zeta}(s))(y,\dot{y})^j-Z_{,j}(\eta,\dot{\eta})(y,\dot{y})^j)ds\no\\
=&Z_{,j}(\eta,\dot{\eta})(y,\dot{y})^j+\int_0^1(1-s)\left(\int_0^s Z_{,jk}(\zeta(\sigma),\dot{\zeta}(\sigma))(y,\dot{y})^kd\sigma\right)(y,\dot{y})^jds\no\\
=:&DZ(\eta,\dot{\eta})(y,\dot{y})+\int_0^1(1-s)\left(\int_0^s D^2Z(\zeta(\sigma),\dot{\zeta}(\sigma))((y,\dot{y}),(y,\dot{y}))d\sigma\right)ds,\label{y.eqn}
\end{align}
since $\zeta(s)-\eta=\zeta+s(\eta-\zeta)-\eta=(1-s)(\zeta-\eta)=(1-s)y$ for $s\in [0,1]$ and $(1-\sigma')\zeta(s)+\sigma'\eta=(1-\sigma')((1-s)\zeta+s\eta)+\sigma'\eta=(1-s)(1-\sigma')\zeta+(s+(1-s)\sigma')\eta=(1-\sigma)\zeta+\sigma\eta=\zeta(\sigma)$, where $\sigma=s+(1-s)\sigma'$ and $1-\sigma=(1-s)(1-\sigma')$ for $\sigma'\in[0,1]$ and $\sigma\in[0,s]$.

Denote $H^s(\Omega)$ the Sobolev spaces  for any real number $s$ with inner product $(\cdot,\cdot)_s$ and corresponding norm $\norm{\cdot}_s$. For a nonnegative integer $s$,
\begin{align*}
(f,g)_s=\sum_{k=0}^s\int_{\Omega} \langle D^kf,D^k g\rangle dx,
\end{align*}
with the usual extensions to other $s$.

\begin{proposition}\label{prop.1}
	Let $s\gs 1$ and $H_0\in H^{s+2}$. Then, 
	\begin{align*}
	\norm{D^2Z(\zeta,\dot{\zeta})((y,\dot{y}),(y,\dot{y}))}_s
	\ls& C\norm{(y,\dot{y})}_{1}^{(2s-1)/2s}\norm{D^{s+1}(y,\dot{y})}_{0}^{(2s+1)/2s}\\
	&+ C\norm{(y,\dot{y})}_{s}\norm{D^{s+1}(y,\dot{y})}_{0}+Cb^2\norm{y}_{s+2},
	\end{align*} 
	where $C$ is uniform for all $(\zeta,\dot{\zeta})$ in a $H^{s+3}$ neighborhood of the curve $(\eta(t),\dot{\eta}(t))$.
\end{proposition}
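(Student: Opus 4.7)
The approach is to expand $D^2 Z(\zeta,\dot\zeta)$ explicitly using the formula~\eqref{eqn1} for $Z$, then estimate every resulting term in $H^s(\Omega)$ by a combination of Sobolev product inequalities, elliptic regularity for $K(\zeta)^{-1}$, composition (change-of-variables) estimates for $\Re_\zeta$, and Gagliardo--Nirenberg interpolation.

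First I would compute $D^2 Z(\zeta,\dot\zeta)((y,\dot y),(y,\dot y))$ by taking $\D_s^2|_{s=0}$ of $Z(\zeta+sy,\dot\zeta+s\dot y)$, in direct analogy with the first differentiation that produced~\eqref{wzeta}. Using Lemma~\ref{lem.1} for the first derivatives of $\etaD[\nb]$, $K(\zeta)$, and $K(\zeta)^{-1}$, together with the chain rule applied twice through $\Re_\zeta$ and $\Re_{\zeta^{-1}}$, the outcome is a finite sum of multilinear expressions which are bilinear in $(y,\dot y)$ and polynomial in $\zeta,\dot\zeta,\zeta^{-1},H_0$, threaded by $\nb$, $\Delta^{-1}$, commutators $[u\cdot\nb,\cdot]$, and the harmonic projector $\h$, where $u=\Re_{\zeta^{-1}}y$.

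The estimates themselves proceed by standard two-dimensional Sobolev calculus: the Moser/Kato--Ponce inequality $\norm{fg}_s\ls C(\norm{f}_{L^\infty}\norm{g}_s+\norm{f}_s\norm{g}_{L^\infty})$ with the embedding $H^{1+\eps}\hookrightarrow L^\infty$; the elliptic gain $\norm{\Delta^{-1}F}_{s+2}\ls C\norm{F}_s$ (hence $\norm{\nb\Delta^{-1}F}_{s+1}\ls C\norm{F}_s$) together with the corresponding $H^{s+2}$ bound on the harmonic extension $\h$; and the composition bound $\norm{\Re_\zeta g}_s\ls C(\norm{\zeta}_{s+1})\norm{g}_s$. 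The assumed $H^{s+3}$ neighborhood of $(\eta(t),\dot\eta(t))$ supplies uniform control of $\norm{\zeta}_{s+3}$ and, via the inverse-function theorem in Sobolev spaces, of $\norm{\zeta^{-1}}_{s+3}$, so that every coefficient generated by the expansion is uniformly bounded.

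The three summands on the right-hand side correspond to three structural classes of terms. The Gagliardo--Nirenberg summand with exponents $(2s\mp1)/(2s)$ captures the borderline bilinear contributions in which one copy of $(y,\dot y)$ appears at the top order $s+1$ while the other is forced into an intermediate Sobolev norm; these are handled by interpolating the intermediate factor between $H^1$ and $H^{s+1}$, the two fractional exponents being arranged to sum to $2$ in accordance with the bilinearity. The plain product $\norm{(y,\dot y)}_s\norm{D^{s+1}(y,\dot y)}_0$ collects the remaining non-magnetic bilinear contributions, handled directly by Kato--Ponce. The magnetic term $Cb^2\norm{y}_{s+2}$ collects all contributions carrying at least one factor of $H_0$ (hence the $b^2$ scaling) in which the elliptic gain of $\nb\Delta^{-1}$ has already displaced the derivatives off $(y,\dot y)$ and onto the smooth coefficients coming from $\zeta$ and $H_0$, leaving only an $H^{s+2}$ dependence on $y$. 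The main obstacle is bookkeeping rather than any single inequality: one must verify that the second differentiation of $K(\zeta)^{-1}$ and of the commutators $\lsi[{[u\cdot\nb,\nb\Delta^{-1}]}]{\zeta}$ and $\lsi[{\h(u\cdot\nb)\Delta^{-1}}]{\zeta}$ never loses more derivatives than the elliptic theory can restore, so that every term in the full expansion falls into exactly one of the three classes above.
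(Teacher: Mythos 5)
Your approach matches the paper's almost exactly: compute $D^2Z$ by a second directional differentiation of the explicit formula (the paper writes $D^2Z=D_y\,DZ$ and applies the commutator identities of Lemma~\ref{lem.1}, supplemented by the computed values of $D_y\h_\zeta$, $D_y\lsi[{(u\cdot\nb)}]{\zeta}=0$ and $D_y\lsi[{[u\cdot\nb,\nb\Delta^{-1}]}]{\zeta}$), observe that all coefficients involve at most three derivatives of $(\zeta,\dot\zeta,\zeta^{-1},H_0)$, and estimate the resulting multilinear expressions by Sobolev product bounds and Gagliardo--Nirenberg interpolation, with uniformity of $C$ supplied by the compactness of the orbit $\{\eta(t)\}$ in $H^{s+3}$ and the algebra property of $H^s(\R^2)$ for $s>1$.

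Two small points worth sharpening. First, the harmonic projector $\h(\zeta)$ is bounded $H^s\to H^s$ but does not gain derivatives, so your appeal to an ``$H^{s+2}$ bound on the harmonic extension $\h$'' should be weakened to boundedness; this does not affect the outcome, since $\h$ appears only composed with the genuinely smoothing factors $\Delta^{-1}$ and $\nb\Delta^{-1}$. Second, the magnetic pieces of $D^2Z$ are still \emph{bilinear} in $(y,\dot y)$; the stated bound $Cb^2\norm{y}_{s+2}$ is linear in $y$ because one low-order factor of $y$ is absorbed into the uniform constant via the $H^{s+3}$-neighborhood hypothesis (which controls $\zeta-\eta$ and hence $y$ in the application), a step the paper performs tacitly when it compresses the magnetic contribution to $b^2\norm{C(\zeta,\dot\zeta,H_0)(y,Dy,D^2y)}_s$; you should make this absorption explicit rather than asserting that the derivatives have simply been displaced onto $\zeta$ and $H_0$.
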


\begin{proof}
	
	We compute $DZ(\eta,\dot{\eta})(y,\dot{y})$  first. Let  $y=\D_s\zeta(t,s)|_{s=0}$ and $u=y\circ\zeta^{-1}$  for convenience. Since $(D\zeta)^{-1}=D(\zeta^{-1})\circ\zeta$, so $$(D\zeta)^{-1}D\dot{\zeta}=D(\zeta^{-1})^j\circ\zeta D_j\dot{\zeta}=\Re_\zeta (D(\zeta^{-1})^j (D_j\dot{\zeta}\circ\zeta^{-1}))=\Re_\zeta D(\dot{\zeta}\circ\zeta^{-1})=D_\zeta \dot{\zeta}$$ and similarly $(D\zeta)^{-1}D\dot{y}=D_\zeta \dot{y}$. Moreover, 
	\begin{align*}
	[u\cdot\nb,D]_\zeta \dot{\zeta}=&\Re_{\zeta}[u\cdot\nb,D] (\dot{\zeta}\circ\zeta^{-1})=\Re_{\zeta}(u\cdot\nb D (\dot{\zeta}\circ\zeta^{-1}))-\Re_{\zeta } (D(u\cdot\nb (\dot{\zeta}\circ\zeta^{-1})))\\
	=& -\Re_{\zeta} (Du\cdot\nb (\dot{\zeta}\circ\zeta^{-1}))
	= -\Re_{\zeta} (Du \Re_{\zeta^{-1}} D_\zeta \dot{\zeta})\\
	=& -(Du\circ\zeta) D_\zeta \dot{\zeta}.
	\end{align*}
	Thus,  similar to \eqref{wzeta}, one has
	\begin{align}
	DZ(\zeta,\dot{\zeta})(y,\dot{y})=&\lsi[{\left([u\cdot\nb,\nb\Delta^{-1}]-\nb(\h(u\cdot\nb)\Delta^{-1})\right)}]{\zeta}\tr(\lsi[D]{\zeta} \dot{\zeta})^2\no\\
	&+2\lsi[\nb]{\zeta}\lsi[\Delta]{\zeta}^{-1}\tr\left(D_\zeta\dot{\zeta}D_\zeta\dot{y}+D_\zeta\dot{\zeta}([u\cdot\nb,D]_\zeta \dot{\zeta})\right)\no\\
	&-\lsi[{[u\cdot\nb,\nb\Delta^{-1}]}]{\zeta}\tr(\zeta^j_{,kl} \Re_\zeta((\zeta^{-1})^l_{,i}H^k_0))^2\no\\
	&+\lsi[\nb]{\zeta}\lsi[{(\h(u\cdot\nb)\Delta^{-1})}]{\zeta}\tr(\zeta^j_{,kl} \Re_\zeta((\zeta^{-1})^l_{,i}H^k_0))^2\no\\
	&-\lsi[\nb]{\zeta}\lsi[\Delta]{\zeta}^{-1}(2\zeta^i_{,k'l'} \Re_\zeta((\zeta^{-1})^{l'}_{,j}H^{k'}_0)y^j_{,kl} \Re_\zeta((\zeta^{-1})^l_{,i}H^k_0))\no\\
	&+\lsi[\nb]{\zeta}\lsi[\Delta]{\zeta}^{-1}(2\zeta^i_{,k'l'}\zeta^j_{,kl} \Re_\zeta((\zeta^{-1})^{l'}_{,j}H^{k'}_0 (\zeta^{-1})^l_{,n}u^n_{,i} H^k_0))\no\\
	&-\lsi[\nb]{\zeta}\lsi[\Delta]{\zeta}^{-1}(2\zeta^i_{,k'l'} \zeta^j_{,kl}\Re_\zeta((\zeta^{-1})^{l'}_{,j}H^{k'}_0 (\zeta^{-1})^l_{,i}  H^k_{0,m}u^m))\no\\
	&-\lsi[{[u\cdot\nb,\nb\Delta^{-1}]}]{\zeta}(\zeta^j_{,kl}\Re_\zeta(H^k_0H^l_0)_{,j})\no\\
	&+\lsi[\nb]{\zeta}\lsi[{(\h(u\cdot\nb)\Delta^{-1})}]{\zeta}(\zeta^j_{,kl}\Re_\zeta(H^k_0H^l_0)_{,j})\no\\
	&-\lsi[\nb]{\zeta}\lsi[\Delta]{\zeta}^{-1}(y^i_{,kl}\Re_\zeta(H^k_0H^l_0)_{,i})-\lsi[\nb]{\zeta}\lsi[\Delta]{\zeta}^{-1}(\zeta^i_{,kl}\Re_\zeta((H^k_0H^l_0)_{,im}u^m))\no\\
	&-\lsi[{[u\cdot\nb,\nb\Delta^{-1}]}]{\zeta}\tr(\zeta^j_{,k}\Re_\zeta H^k_{0,i})^2\no\\
	&+\lsi[\nb]{\zeta}\lsi[{(\h(u\cdot\nb)\Delta^{-1})}]{\zeta}\tr(\zeta^j_{,k}\Re_\zeta H^k_{0,i})^2\no\\
	&-\lsi[\nb]{\zeta}\lsi[\Delta]{\zeta}^{-1}(2\zeta^i_{,k'}y^j_{,k}\Re_\zeta( H^{k'}_{0,j} H^k_{0,i}))\no\\
	&-\lsi[\nb]{\zeta}\lsi[\Delta]{\zeta}^{-1}(2\zeta^i_{,k'}\zeta^j_{,k}\Re_\zeta (H^{k'}_{0,j}H^k_{0,il}u^l))\no\\
	&+ \zeta_{,nm}\Re_\zeta((\zeta^{-1})^m_{,l} H^l_0H^n_0)+ \zeta_{,n}\Re_\zeta(H^n_{0,l}H^l_0). \label{DZ}
	\end{align}
	
	For any function $f$ on $\Omega$,  $\lsi[\nb^j]{\zeta}f=\Re_{\zeta}\D_j(f(\zeta^{-1}))=\Re_{\zeta}\D_if(\zeta^{-1})\D_j(\zeta^{-1})^i=(D(\zeta^{-1})\circ\zeta\nb f)^j$ and $\frac{\D}{\D \zeta^j}f=\frac{\D}{\D \zeta^j}f(\zeta^{-1}(\zeta))=\D_if \frac{\D}{\D \zeta^j}(\zeta^{-1}(\zeta))^i=(D(\zeta^{-1})\circ\zeta\nb f)^j$, i.e., $\lsi[\nb^j]{\zeta}f\equiv\frac{\D}{\D \zeta^j}f$. Thus,
	\begin{align*}
	D^2Z(\zeta,\dot{\zeta})((y,\dot{y}),(y,\dot{y}))=\lsi[\nb^j]{\zeta}DZ(\zeta,\dot{\zeta})(y,\dot{y})y_j+\frac{\D}{\D \dot{\zeta}^j}DZ(\zeta,\dot{\zeta})(y,\dot{y})\dot{y}_j.
	\end{align*}
	Clearly, $$\frac{\D}{\D \dot{\zeta}^j}DZ(\zeta,\dot{\zeta})(y,\dot{y})\dot{y}_j=0$$
	due to the form of $DZ(\zeta,\dot{\zeta})(y,\dot{y})$ in \eqref{DZ} where all $\dot{\zeta}$'s appear in the form of derivatives like $\lsi[D]{\zeta}\dot{\zeta}$. So only the first term needs to be considered. 
	\begin{subequations}\label{D2Z}
		\begin{align}
		y_j\lsi[\nb^j]{\zeta}D&Z(\zeta,\dot{\zeta})(y,\dot{y})=D_yDZ(\zeta,\dot{\zeta})(y,\dot{y})\no\\
		=&D_y\lsi[{\left([u\cdot\nb,\nb\Delta^{-1}]-\nb(\h(u\cdot\nb)\Delta^{-1})\right)}]{\zeta}\tr(\lsi[D]{\zeta} \dot{\zeta})^2 \label{D2Z.1}\\
		&+\lsi[{\left([u\cdot\nb,\nb\Delta^{-1}]-\nb(\h(u\cdot\nb)\Delta^{-1})\right)}]{\zeta}D_y\tr(\lsi[D]{\zeta} \dot{\zeta})^2 \label{D2Z.1'}\\
		&+2D_y(\lsi[\nb]{\zeta}\lsi[\Delta]{\zeta}^{-1})\tr\left(D_\zeta\dot{\zeta}D_\zeta\dot{y}+D_\zeta\dot{\zeta}([u\cdot\nb,D]_\zeta \dot{\zeta})\right)\label{D2Z.2}\\
		&+2\lsi[\nb]{\zeta}\lsi[\Delta]{\zeta}^{-1}D_y\tr\left(D_\zeta\dot{\zeta}D_\zeta\dot{y}+D_\zeta\dot{\zeta}([u\cdot\nb,D]_\zeta \dot{\zeta})\right)\label{D2Z.2'}\\
		&-D_y\lsi[{\left([u\cdot\nb,\nb\Delta^{-1}]-\nb(\h(u\cdot\nb)\Delta^{-1})\right)}]{\zeta}\tr(\zeta^m_{,kl} \Re_\zeta((\zeta^{-1})^l_{,i}H^k_0))^2\label{D2Z.3}\\
		&-\lsi[{\left([u\cdot\nb,\nb\Delta^{-1}]-\nb(\h(u\cdot\nb)\Delta^{-1})\right)}]{\zeta}D_y\tr(\zeta^m_{,kl} \Re_\zeta((\zeta^{-1})^l_{,i}H^k_0))^2 \label{D2Z.4}\\
		&-D_y(\lsi[\nb]{\zeta}\lsi[\Delta]{\zeta}^{-1})(2\zeta^i_{,k'l'} \Re_\zeta((\zeta^{-1})^{l'}_{,m}H^{k'}_0)y^m_{,kl} \Re_\zeta((\zeta^{-1})^l_{,i}H^k_0))\label{D2Z.5}\\
		&-\lsi[\nb]{\zeta}\lsi[\Delta]{\zeta}^{-1}D_y(2\zeta^i_{,k'l'} \Re_\zeta((\zeta^{-1})^{l'}_{,m}H^{k'}_0)y^m_{,kl} \Re_\zeta((\zeta^{-1})^l_{,i}H^k_0))\label{D2Z.5'}\\
		&+D_y(\lsi[\nb]{\zeta}\lsi[\Delta]{\zeta}^{-1})(2\zeta^i_{,k'l'}\zeta^m_{,kl} \Re_\zeta((\zeta^{-1})^{l'}_{,m}H^{k'}_0 (\zeta^{-1})^l_{,n}u^n_{,i} H^k_0))\label{D2Z.6}\\
		&+\lsi[\nb]{\zeta}\lsi[\Delta]{\zeta}^{-1}D_y(2\zeta^i_{,k'l'}\zeta^m_{,kl} \Re_\zeta((\zeta^{-1})^{l'}_{,m}H^{k'}_0 (\zeta^{-1})^l_{,n}u^n_{,i} H^k_0))\label{D2Z.6'}\\
		&-D_y(\lsi[\nb]{\zeta}\lsi[\Delta]{\zeta}^{-1})(2\zeta^i_{,k'l'} \zeta^{j'}_{,kl}\Re_\zeta((\zeta^{-1})^{l'}_{,j'}H^{k'}_0 (\zeta^{-1})^l_{,i}  H^k_{0,m}u^m))\label{D2Z.7}\\
		&-\lsi[\nb]{\zeta}\lsi[\Delta]{\zeta}^{-1}D_y(2\zeta^i_{,k'l'} \zeta^{j'}_{,kl}\Re_\zeta((\zeta^{-1})^{l'}_{,j'}H^{k'}_0 (\zeta^{-1})^l_{,i}  H^k_{0,m}u^m))\label{D2Z.7'}\\
		&-D_y\lsi[{\left([u\cdot\nb,\nb\Delta^{-1}]-\nb(\h(u\cdot\nb)\Delta^{-1})\right)}]{\zeta}(\zeta^m_{,kl}\Re_\zeta(H^k_0H^l_0)_{,m})\label{D2Z.8}\\
		&-\lsi[{\left([u\cdot\nb,\nb\Delta^{-1}]-\nb(\h(u\cdot\nb)\Delta^{-1})\right)}]{\zeta}D_y(\zeta^m_{,kl}\Re_\zeta(H^k_0H^l_0)_{,m})\label{D2Z.9}\\
		&-D_y(\lsi[\nb]{\zeta}\lsi[\Delta]{\zeta}^{-1})(y^i_{,kl}\Re_\zeta(H^k_0H^l_0)_{,i})\label{D2Z.10}\\
		&-\lsi[\nb]{\zeta}\lsi[\Delta]{\zeta}^{-1}D_y(y^i_{,kl}\Re_\zeta(H^k_0H^l_0)_{,i})\label{D2Z.10'}\\
		&-D_y(\lsi[\nb]{\zeta}\lsi[\Delta]{\zeta}^{-1})(\zeta^i_{,kl}\Re_\zeta((H^k_0H^l_0)_{,im}u^m))\label{D2Z.11}\\
		&-\lsi[\nb]{\zeta}\lsi[\Delta]{\zeta}^{-1}D_y(\zeta^i_{,kl}\Re_\zeta((H^k_0H^l_0)_{,im}u^m))\label{D2Z.11'}\\
		&-D_y\lsi[{\left([u\cdot\nb,\nb\Delta^{-1}]-\nb(\h(u\cdot\nb)\Delta^{-1})\right)}]{\zeta}\tr(\zeta^m_{,k}\Re_\zeta H^k_{0,i})^2\label{D2Z.12}\\
		&-\lsi[{\left([u\cdot\nb,\nb\Delta^{-1}]-\nb(\h(u\cdot\nb)\Delta^{-1})\right)}]{\zeta}D_y\tr(\zeta^m_{,k}\Re_\zeta H^k_{0,i})^2\label{D2Z.13}\\
		&-D_y(\lsi[\nb]{\zeta}\lsi[\Delta]{\zeta}^{-1})(2\zeta^i_{,k'}y^m_{,k}\Re_\zeta( H^{k'}_{0,m} H^k_{0,i}))\label{D2Z.14}\\
		&-\lsi[\nb]{\zeta}\lsi[\Delta]{\zeta}^{-1}D_y(2\zeta^i_{,k'}y^m_{,k}\Re_\zeta( H^{k'}_{0,m} H^k_{0,i}))\label{D2Z.14'}\\
		&-D_y(\lsi[\nb]{\zeta}\lsi[\Delta]{\zeta}^{-1})(2\zeta^i_{,k'}\zeta^m_{,k}\Re_\zeta (H^{k'}_{0,m}H^k_{0,il}u^l))\label{D2Z.15}\\
		&-\lsi[\nb]{\zeta}\lsi[\Delta]{\zeta}^{-1}D_y(2\zeta^i_{,k'}\zeta^m_{,k}\Re_\zeta (H^{k'}_{0,m}H^k_{0,il}u^l))\label{D2Z.15'}\\
		&+D_y(\zeta_{,nm}\Re_\zeta((\zeta^{-1})^m_{,l} H^l_0H^n_0)+ \zeta_{,n}\Re_\zeta(H^n_{0,l}H^l_0)).\label{D2Z.16}
		\end{align}
	\end{subequations}
	Since $y=u\circ\zeta$, it follows that $y=u\circ\zeta=\D_s\zeta(t,s)|_{s=0}$ and
	\begin{align}\label{DyDelta}
	\D_s K(\zeta(t,s))|_{s=0}=&\lsi[\nb^j]{\zeta}K(\zeta(t,s))\D_s \zeta_j(t,s)|_{s=0}=y_j\lsi[\nb^j]{\zeta}K(\zeta)=D_y\lsi[\Delta]{\zeta},
	\end{align}
	which yields from   $\h_\zeta=I-\lsi[\Delta]{\zeta}^{-1}\lsi[\Delta]{\zeta}$ and the identity 2) in Lemma \ref{lem.1}, that 
	\begin{align}\label{DyH}
	D_y\h_\zeta =&D_y(I-\lsi[\Delta]{\zeta}^{-1}\lsi[\Delta]{\zeta})\no\\
	=&-(D_y\lsi[\Delta]{\zeta}^{-1})\lsi[\Delta]{\zeta}-\lsi[\Delta]{\zeta}^{-1}(D_y\lsi[\Delta]{\zeta})\no\\
	=&\lsi[\Delta]{\zeta}^{-1}(D_y\lsi[\Delta]{\zeta})\lsi[\Delta]{\zeta}^{-1}\lsi[\Delta]{\zeta}-\lsi[\Delta]{\zeta}^{-1}(D_y\lsi[\Delta]{\zeta})\no\\
	=&-\lsi[\Delta]{\zeta}^{-1}(D_y\lsi[\Delta]{\zeta})\h_\zeta \no\\
	=&-\lsi[\Delta]{\zeta}^{-1}\lsi[{[u\cdot\nb,\Delta]}]{\zeta}\h_\zeta.
	\end{align}
	Similarly, we have from the identity 1) in Lemma \ref{lem.1}
	\begin{align}\label{Dyunb}
	D_y\lsi[(u\cdot \nb)]{\zeta}=&y_j\lsi[\nb^j]{\zeta}\lsi[(u\cdot \nb)]{\zeta}=\lsi[\nb^j]{\zeta}\lsi[(u\cdot \nb)]{\zeta}\D_s \zeta_j(t,s)|_{s=0}=\D_s\lsi[(u\cdot \nb)]{\zeta(t,s)}|_{s=0}\no\\
	=&\D_s(u_k(\zeta(t,s))\lsi[\nb^k]{\zeta(t,s)})|_{s=0}\no\\
	=&u_{k,j}(\zeta(t,s))\D_s\zeta^j(t,s)\lsi[\nb^k]{\zeta(t,s)}|_{s=0}+u(\zeta(t,s))\cdot\D_s\lsi[\nb]{\zeta(t,s)}|_{s=0}\no\\
	=&u^j(\zeta)u_{k,j}(\zeta)\lsi[\nb^k]{\zeta}+u(\zeta)\cdot \lsi[{[u\cdot\nb,\nb]}]{\zeta}\no\\
	=&u^j(\zeta)u_{k,j}(\zeta)\lsi[\nb^k]{\zeta}+u_k(\zeta)u_j(\zeta)\lsi[\nb^j]{\zeta}\lsi[\nb^k]{\zeta}\no\\
	&-u^k(\zeta)u_{j,k}(\zeta)\lsi[\nb^j]{\zeta}-u_k(\zeta)u_j(\zeta)\lsi[\nb^k]{\zeta}\lsi[\nb^j]{\zeta}\no\\
	=&0.
	\end{align}
	Hence, it follows from the identity 3) in Lemma \ref{lem.1} that, 
	\begin{align}\label{DyunbnbDelinv}
	&D_y\lsi[{[u\cdot\nb,\nb\Delta^{-1}]}]{\zeta}\no\\
	=&D_y\lsi[{(u\cdot\nb\nb\Delta^{-1})}]{\zeta}-D_y\lsi[{(\nb\Delta^{-1}(u\cdot\nb))}]{\zeta}\no\\
	=&\lsi[{(u\cdot\nb)}]{\zeta}\left((D_y\lsi[\nb]{\zeta})\lsi[\Delta^{-1}]{\zeta}+\lsi[\nb]{\zeta}(D_y\lsi[\Delta^{-1}]{\zeta})\right)\no\\
	&-\left((D_y\lsi[\nb]{\zeta})\lsi[\Delta^{-1}]{\zeta}+\lsi[\nb]{\zeta}(D_y\lsi[\Delta^{-1}]{\zeta})\right)\lsi[{(u\cdot\nb)}]{\zeta}\no\\
	=&\left[\lsi[{(u\cdot\nb)}]{\zeta},\lsi[{[u\cdot\nb,\nb]}]{\zeta}\lsi[\Delta^{-1}]{\zeta}+\lsi[\nb]{\zeta}\lsi[{([u\cdot\nb,\Delta^{-1}]-\h(u\cdot\nb)\Delta^{-1})}]{\zeta}\right]\no\\
	=&\lsi[{[u\cdot\nb,[u\cdot\nb,\nb\Delta^{-1}]]}]{\zeta}-\lsi[{[u\cdot\nb,\nb\h(u\cdot\nb)\Delta^{-1}]}]{\zeta}.
	\end{align}
	
	Therefore, we obtain, from \eqref{DyDelta}-\eqref{DyunbnbDelinv}, that
	\begin{align*}
	\eqref{D2Z.1}=&\Big(\lsi[{[u\cdot\nb,[u\cdot\nb,\nb\Delta^{-1}]]}]{\zeta}-\lsi[{[u\cdot\nb,\nb\h(u\cdot\nb)\Delta^{-1}]}]{\zeta}\no\\
	&-\lsi[{([u\cdot\nb,\nb]\h(u\cdot\nb)\Delta^{-1})}]{\zeta}+\lsi[{(\nb \Delta^{-1}[u\cdot\nb,\Delta]\h(u\cdot\nb)\Delta^{-1})}]{\zeta}\no\\
	&+\lsi[{(\nb \h(u\cdot\nb)\Delta^{-1}[u\cdot\nb,\Delta]\Delta^{-1})}]{\zeta}\Big)\tr(\lsi[D]{\zeta} \dot{\zeta})^2.
	\end{align*}
	The other terms in \eqref{D2Z} including the operator $D_y\big([u\cdot\nb,\nb\Delta^{-1}]-\nb\h(u\cdot\nb)\Delta^{-1}\big)_{\zeta}$, namely \eqref{D2Z.3}, \eqref{D2Z.8} and \eqref{D2Z.12}, have similar identities.
	
	We have, from \eqref{Dstr} and the analogue of \eqref{DyDelta}, that
	\begin{align}
	\eqref{D2Z.1'}=&2\lsi[{\left([u\cdot\nb,\nb\Delta^{-1}]-\nb\h(u\cdot\nb)\Delta^{-1}\right)}]{\zeta}\no\\
	&\qquad  \cdot\tr\left(D_\zeta\dot{\zeta}D_\zeta\dot{y}+D_\zeta\dot{\zeta}([u\cdot\nb,D]_\zeta \dot{\zeta})\right).\label{Dytr}
	\end{align}
	
	By Lemma \ref{lem.1}, one has 
	\begin{align*}
	\eqref{D2Z.2}=&2([u\cdot\nb,\nb\Delta^{-1}]-\nb\h(u\cdot\nb)\Delta^{-1})_{\zeta}\no\\
	&\qquad \cdot\tr\left(D_\zeta\dot{\zeta}D_\zeta\dot{y}+D_\zeta\dot{\zeta}([u\cdot\nb,D]_\zeta \dot{\zeta})\right)=\eqref{Dytr}.
	\end{align*}
	The other terms in \eqref{D2Z} including the operator $D_y\lsi[{\left(\nb\Delta^{-1}\right)}]{\zeta}$, namely \eqref{D2Z.5}, \eqref{D2Z.6}, \eqref{D2Z.7}, \eqref{D2Z.10}, \eqref{D2Z.11}, \eqref{D2Z.14} and \eqref{D2Z.15}, have similar identities.
	
	Since $D_y\zeta_k=y_j\nb_{\zeta}^j\zeta_k=y_j\delta^j_k=y_k$,  $D_y\zeta=y$ and 
	\begin{align*}
	\eqref{D2Z.2'}=&2\lsi[\nb]{\zeta}\lsi[\Delta]{\zeta}^{-1}\tr \Big((D_\zeta\dot{y})^2+[u\cdot\nb,D]_\zeta \dot{\zeta}D_\zeta\dot{y}+D_\zeta\dot{\zeta}[u\cdot\nb,D]_\zeta\dot{y}\no\\
	&+([u\cdot\nb,D]_\zeta\dot{\zeta})([u\cdot\nb,D]_\zeta \dot{\zeta})+D_\zeta\dot{y}([u\cdot\nb,D]_\zeta \dot{\zeta})\no\\
	&+D_\zeta\dot{\zeta}((u\cdot\nb)_{\zeta}[u\cdot\nb,D]_{\zeta} \dot{\zeta}-[u\cdot\nb,D]_{\zeta}(u\cdot\nb)_{\zeta} \dot{\zeta}-D_{\zeta}(u\cdot\nb)_{\zeta} \dot{y})\Big)\no\\
	=&2\lsi[\nb]{\zeta}\lsi[\Delta]{\zeta}^{-1}\tr \Big((D_\zeta\dot{y})^2+2[u\cdot\nb,D]_\zeta \dot{\zeta}D_\zeta\dot{y}+2D_\zeta\dot{\zeta}[u\cdot\nb,D]_\zeta\dot{y}\no\\
	&+([u\cdot\nb,D]_\zeta\dot{\zeta})^2+D_\zeta\dot{\zeta}[u\cdot\nb,[u\cdot\nb,D]]_{\zeta} \dot{\zeta}-D_\zeta\dot{\zeta}(u\cdot\nb)_{\zeta}D_{\zeta} \dot{y}\Big).
	\end{align*}

	Due to
	\begin{align*}
	D_y\zeta^m_{,k}(t,\zeta^{-1}(t,\zeta))=&y_j\nb_\zeta^j\zeta^m_{,k}(t,\zeta^{-1}(t,\zeta)) =y^j\zeta^m_{,kl}(t,\zeta^{-1}(t,\zeta))(\zeta^{-1})^l_{,j}\circ\zeta\\
	=&\zeta^m_{,kl}(t,\zeta^{-1}(t,\zeta))((D\zeta)^{-1})^l_j y^j,
	\end{align*}  
	we have $$D_yD\zeta=D^2\zeta(D\zeta)^{-1}y,$$ 
	and  in general
	\begin{align*}
	D_yD^k\zeta=D^{k+1}\zeta(D\zeta)^{-1}y, \quad \forall k\in\NN\cup\{0\}.
	\end{align*}
	Thus, we get for \eqref{D2Z.4}
	\begin{align*}
	&D_y\tr(\zeta^m_{,kl} \Re_\zeta((\zeta^{-1})^l_{,i}H^k_0))^2
	=D_y(\zeta^m_{,kl} ((D\zeta)^{-1})^l_{i}H^k_0(\zeta)\zeta^i_{,k'l'} ((D\zeta)^{-1})^{l'}_{m}H^{k'}_0(\zeta))\\
	=&2\zeta^m_{,klj}((D\zeta)^{-1})^j_ny^n((D\zeta)^{-1})^l_{i}H^k_0(\zeta)\zeta^i_{,k'l'} ((D\zeta)^{-1})^{l'}_{m}H^{k'}_0(\zeta)\\
	&-2\zeta^m_{,kl}((D\zeta)^{-1})^l_{j}\zeta^j_{,nn'}((D\zeta)^{-1})^{n'}_{i'}y^{i'}((D\zeta)^{-1})^n_{i}H^k_0(\zeta)\zeta^i_{,k'l'} ((D\zeta)^{-1})^{l'}_{m}H^{k'}_0(\zeta)\\
	&+2\zeta^m_{,kl} ((D\zeta)^{-1})^l_{i}H^k_{0,j}(\zeta)y^j\zeta^i_{,k'l'} ((D\zeta)^{-1})^{l'}_{m}H^{k'}_0(\zeta).
	\end{align*}
	Generally, the derivative $D_y$  makes the target to increase one derivative with respect to $\zeta$ for $\zeta$, $y$, $u$ or $H_0$. So we omit the detailed derivation of $D_y$ terms in \eqref{D2Z.5'}, \eqref{D2Z.6'}, \eqref{D2Z.7'}, \eqref{D2Z.9}, \eqref{D2Z.10'}, \eqref{D2Z.11'}, \eqref{D2Z.13}, \eqref{D2Z.14'}, \eqref{D2Z.15'} and \eqref{D2Z.16}.
	
	For any $s$, $\Delta^{-1}: H^s(\Omega)\to H^{s+2}(\Omega)$ and thus $\h:H^s(\Omega)\to H^s(\Omega)$ is bounded. Therefore $D^2Z(\zeta,\dot{\zeta})$ is a bilinear operator in $(y,\dot{y})$ due to $y=u\circ\zeta$ with coefficients depending on at most three derivatives of $(\zeta,\dot{\zeta})$. Moreover, those terms excluding $H_0$ are the first order in $(y,\dot{y})$ times zeroth order in $(y,\dot{y})$, while  the orders of derivatives of $y$ or $H_0$ in each term is at most $2$ for those terms including $H_0$. Thus, by Sobolev's embedding theorem (cf. \cite{Niren59}),
	\begin{align*}
	&\norm{D^2Z(\zeta,\dot{\zeta})((y,\dot{y}),(y,\dot{y}))}_1\\
	\ls &\norm{C(\zeta,\dot{\zeta})(y,\dot{y})(Dy,D\dot{y})}_1+b^2\norm{C(\zeta,\dot{\zeta},H_0)(y,Dy,D^2y)}_1\\
	\ls &\norm{C(\zeta,\dot{\zeta})(y,\dot{y})(Dy,D\dot{y})}_0+\norm{DC(\zeta,\dot{\zeta})(y,\dot{y})(Dy,D\dot{y})}_0\\
	&+\norm{C(\zeta,\dot{\zeta})D(y,\dot{y})(Dy,D\dot{y})}_0+\norm{C(\zeta,\dot{\zeta})(y,\dot{y})D(Dy,D\dot{y})}_0\\
	&+b^2\norm{C(\zeta,\dot{\zeta})(y,Dy,D^2y)}_0+b^2\norm{DC(\zeta,\dot{\zeta})(y,Dy,D^2y)}_0\\
	&+b^2\norm{C(\zeta,\dot{\zeta})D(y,Dy,D^2y)}_0\\
	\ls &C\norm{D^{1/2}(y,\dot{y})}_{0}\norm{D^{3/2}(y,\dot{y})}_0+C\norm{D^{3/2}(y,\dot{y})}_{0}^2\\
	&+C\norm{(y,\dot{y})}_{3/2}\norm{D^2(y,\dot{y})}_{0}+Cb^2\sum_{k=0}^3\norm{y}_k\\
	\ls & C\norm{(y,\dot{y})}_{0}^{1/2}\norm{D(y,\dot{y})}_{0}\norm{D^2(y,\dot{y})}_{0}^{1/2}+C\norm{D(y,\dot{y})}_{0}\norm{D^2(y,\dot{y})}_{0}\\
	&+C\norm{D(y,\dot{y})}_{0}^{1/2}\norm{D^2(y,\dot{y})}_{0}^{3/2}+Cb^2\sum_{k=0}^3\norm{y}_k\\
	\ls &C\norm{(y,\dot{y})}_{1}\norm{D^2(y,\dot{y})}_{0}+C\norm{(y,\dot{y})}_{1}^{1/2}\norm{D^2(y,\dot{y})}_{0}^{3/2}+Cb^2\norm{y}_3,
	\end{align*}
	where $C(\cdots)$ is a smooth function of $(\zeta,\dot{\zeta})$ and their first three derivatives.
	
	The image of $\eta(t)$ is a circle in $H^{s+3}(\Omega;\R^2)$ which is compact and since $H^s(\R^2)$ is an algebra for $s>1$, it then follows
	\begin{align*}
	&\norm{D^2Z(\zeta,\dot{\zeta})((y,\dot{y}),(y,\dot{y}))}_s\\
	\ls &\norm{C(\zeta,\dot{\zeta})(y,\dot{y})(Dy,D\dot{y})}_s+b^2\norm{C(\zeta,\dot{\zeta},H_0)(y,Dy,D^2y)}_s\\
	\ls &C\norm{(y,\dot{y})}_{s}\norm{D^{s+1}(y,\dot{y})}_{0}+C\norm{(y,\dot{y})}_{1}^{(2s-1)/2s}\norm{D^{s+1}(y,\dot{y})}_{0}^{(2s+1)/2s}+Cb^2\norm{y}_{s+2}.
	\end{align*}
	Thus, the desired estimates follow  for all $(\zeta,\dot{\zeta})$ near $(\eta(t),\dot{\eta}(t))$ in $H^{s+3}(\Omega;\R^2)$ with fixed $C$.
\end{proof}

\begin{lemma}\label{lem}
	Let $m>l\gs 1$ be an integer satisfying 	$\norm{D^{m+1}y}_{0}\ls C_m \norm{Dy}_{0}$. Then, it holds
	\begin{align*}
	\norm{y}_{l+1}\ls C\norm{y}_1.
	\end{align*}
\end{lemma}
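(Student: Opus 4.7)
The statement is an interpolation result: the hypothesis controls the top derivative $D^{m+1}y$ by the bottom derivative $Dy$, and we wish to conclude that every intermediate Sobolev norm $\norm{y}_{l+1}$ is dominated by $\norm{y}_1$. The natural tool is a Gagliardo--Nirenberg type interpolation inequality between $L^2$-based derivative norms.

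The plan is as follows. First, since $m > l \geq 1$, the integer $l+1$ satisfies $1 \leq l+1 \leq m$, so $D^{l+1}y$ sits strictly between $Dy$ and $D^{m+1}y$ in the derivative scale. I would apply the Gagliardo--Nirenberg interpolation inequality on $\Omega \subset \R^2$ in the form
\begin{align*}
\norm{D^{l+1} y}_0 \ls C \norm{D y}_0^{1-\theta} \norm{D^{m+1} y}_0^{\theta}, \qquad \theta=\frac{l}{m}\in(0,1),
\end{align*}
which is the standard $L^2$-interpolation among pure derivative norms (one can use the Fourier-multiplier form on $\R^2$ after extension, or the classical Nirenberg inequality; since we already use Sobolev embedding and interpolation throughout the paper, this is in the same spirit). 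The same inequality is valid for every intermediate order $1 \leq k \leq m+1$ with exponent $\theta_k=(k-1)/m$.

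Second, I would insert the hypothesis $\norm{D^{m+1}y}_0 \ls C_m \norm{Dy}_0$ into the interpolation. This yields
\begin{align*}
\norm{D^{k} y}_0 \ls C\, \norm{D y}_0^{1-\theta_k} \bigl(C_m \norm{D y}_0\bigr)^{\theta_k} = C\, C_m^{\theta_k}\,\norm{D y}_0 \ls C'\norm{y}_1
\end{align*}
for every $1 \leq k \leq l+1$. The case $k=0$ is trivial since $\norm{y}_0 \ls \norm{y}_1$.

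Finally, summing the squares over $k = 0,1,\ldots, l+1$ gives
\begin{align*}
\norm{y}_{l+1}^2 = \sum_{k=0}^{l+1}\norm{D^k y}_0^2 \ls C\,\norm{y}_1^2,
\end{align*}
which is the desired estimate. I do not anticipate a real obstacle: the statement is essentially a direct corollary of Gagliardo--Nirenberg together with the one hypothesis provided. The only point requiring mild care is the validity of the interpolation inequality on the bounded domain $\Omega$ (the unit disc in our setting), which is standard and can be obtained by a bounded extension operator $H^{m+1}(\Omega)\to H^{m+1}(\R^2)$ followed by Fourier-side interpolation; alternatively, one absorbs lower-order commutator terms using $\norm{y}_0 \ls \norm{y}_1$, which does not affect the conclusion.
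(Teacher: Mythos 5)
Your proof is correct and follows essentially the same route as the paper: apply the Gagliardo--Nirenberg interpolation inequality to bound each intermediate derivative $\norm{D^k y}_0$ by $\norm{Dy}_0^{1-\theta_k}\norm{D^{m+1}y}_0^{\theta_k}$, substitute the hypothesis to collapse the right side to $C\norm{Dy}_0$, and sum over the intermediate orders. The only cosmetic difference is the indexing ($\theta_k=(k-1)/m$ versus the paper's $j/m$ with $k=j+1$); the underlying tool and estimate are identical.
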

\begin{proof}
	From Sobolev's embedding theorem (cf. \cite{Niren59}), we have for any $0\ls j<m$,
	\begin{align*}
	\norm{D^{j+1} y}_0\ls C_{j,m}\norm{D^{m+1} y}_0^{j/m}\norm{Dy}_0^{1-j/m}\ls C_{j,m}\norm{Dy}_0.
	\end{align*}
	Therefore, for any $l<m$
	\begin{align*}
	\norm{Dy}_l=&\sum_{j=0}^l\norm{D^{j+1}y}_0\ls \sum_{j=0}^l C_{j,m}\norm{Dy}_0\ls C_m\norm{Dy}_0,
	\end{align*}
	the desired estimate follows then. 
\end{proof}

\begin{corollary}
	Let $m>s$ be an integer satisfying 	$\norm{D^{m}(y,\dot{y})}_{0}\ls C_m \norm{(y,\dot{y})}_{0}$. Then, it holds
	\begin{align*}
	\norm{D^2Z(\zeta,\dot{\zeta})((y,\dot{y}),(y,\dot{y}))}_s	\ls C\norm{(y,\dot{y})}_{0}^{2-1/m}\norm{D^{m}(y,\dot{y})}_{0}^{1/m}+Cb^2\norm{y}_{s+2}.
	\end{align*}
\end{corollary}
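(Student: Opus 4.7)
The plan is to upgrade the bound of Proposition~\ref{prop.1} into the tighter form claimed in the corollary by applying Gagliardo--Nirenberg interpolation and then using the hypothesis $\norm{D^m(y,\dot{y})}_0 \ls C_m\norm{(y,\dot{y})}_0$ to trade excess factors of $\norm{D^m(y,\dot{y})}_0$ for factors of $\norm{(y,\dot{y})}_0$. From Proposition~\ref{prop.1}, the task reduces to estimating
\begin{align*}
I_1 &= \norm{(y,\dot{y})}_1^{(2s-1)/(2s)}\norm{D^{s+1}(y,\dot{y})}_0^{(2s+1)/(2s)},\\
I_2 &= \norm{(y,\dot{y})}_s\norm{D^{s+1}(y,\dot{y})}_0
\end{align*}
by $C\norm{(y,\dot{y})}_0^{2-1/m}\norm{D^m(y,\dot{y})}_0^{1/m}$, since the $Cb^2\norm{y}_{s+2}$ term carries through unchanged.

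The main ingredient is Nirenberg's interpolation, exactly as invoked in the proof of Lemma~\ref{lem}: $\norm{D^j(y,\dot{y})}_0 \ls C\norm{(y,\dot{y})}_0^{1-j/m}\norm{D^m(y,\dot{y})}_0^{j/m}$ for every $0 \ls j \ls m$. Since $s+1 \ls m$, applying this at $j=s+1$ yields $\norm{D^{s+1}(y,\dot{y})}_0 \ls C\norm{(y,\dot{y})}_0^{1-(s+1)/m}\norm{D^m(y,\dot{y})}_0^{(s+1)/m}$, while combining the cases $j=1,\ldots,s$ with the hypothesis produces the coarser bounds $\norm{(y,\dot{y})}_1 \ls C\norm{(y,\dot{y})}_0$ and $\norm{(y,\dot{y})}_s \ls C\norm{(y,\dot{y})}_0$. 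Plugging these in, both $I_1$ and $I_2$ collapse to monomials of type $\norm{(y,\dot{y})}_0^{2-\alpha}\norm{D^m(y,\dot{y})}_0^{\alpha}$, with $\alpha=(s+1)(2s+1)/(2sm)$ for $I_1$ and $\alpha=(s+1)/m$ for $I_2$.

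The last step is to use the hypothesis once more in the form $\norm{D^m(y,\dot{y})}_0^{\alpha-1/m} \ls C_m^{\alpha-1/m}\norm{(y,\dot{y})}_0^{\alpha-1/m}$, which is legitimate because $\alpha \gs 1/m$ in both cases (equivalent to the elementary inequalities $(s+1)(2s+1) \gs 2s$ and $s+1 \gs 1$). The exponent of $\norm{D^m(y,\dot{y})}_0$ thereby drops to exactly $1/m$, while the exponent of $\norm{(y,\dot{y})}_0$ rises to $2-1/m$, giving the desired estimate. The corollary is therefore a short bookkeeping exercise once Proposition~\ref{prop.1} is in hand; the only point requiring care is tracking the Nirenberg exponents so that the hypothesis can be applied with a nonnegative remaining power on $\norm{D^m(y,\dot{y})}_0$, and this arithmetic, rather than any genuine analytical issue, is the main obstacle.
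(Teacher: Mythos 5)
Your proof is correct, and it takes a genuinely different --- and more economical --- route than the paper. The paper's proof does not pass through Proposition~\ref{prop.1} at all: it returns to the product structure of $D^2Z$ and inserts Nirenberg interpolation term by term into each factor, organizing the result as a product of two finite sums $\sum_{k=0}^{\max(s,2)}\norm{D^m(y,\dot y)}_0^{k/m}\norm{(y,\dot y)}_0^{1-k/m}$ and $\sum_{l=0}^{s}\norm{D^m(y,\dot y)}_0^{l/(m-1)}\norm{D(y,\dot y)}_0^{1-l/(m-1)}$, then estimating these as geometric series and collapsing the ratios $\norm{D^m(y,\dot y)}_0/\norm{(y,\dot y)}_0$ using the hypothesis. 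Your argument instead treats the statement as an actual corollary: you take the already-proved bound of Proposition~\ref{prop.1}, interpolate $\norm{D^{s+1}(y,\dot y)}_0$ against $\norm{(y,\dot y)}_0$ and $\norm{D^m(y,\dot y)}_0$, observe that the hypothesis makes all intermediate Sobolev norms comparable to $\norm{(y,\dot y)}_0$, and then transfer the excess exponent $\alpha-1/m\gs 0$ from $\norm{D^m(y,\dot y)}_0$ to $\norm{(y,\dot y)}_0$. The exponent check ($\alpha_1=(s+1)(2s+1)/(2sm)$ for $I_1$, $\alpha_2=(s+1)/m$ for $I_2$, both $\gs 1/m$) is right. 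What your route buys is brevity and transparency --- a true corollary rather than a reprise of the proposition's proof; what the paper's route buys is independence from the specific form of Proposition~\ref{prop.1}'s interpolation exponents. One shared caveat worth noting explicitly: the interpolation at level $s+1$ (in your version) and at $l/(m-1)$ up to $l=s$ (in the paper's) both require $m\gs s+1$ so that the interpolation parameter lies in $[0,1]$; this is automatic when $s$ is an integer, which is how the hypothesis ``$m>s$ an integer'' is evidently meant to be read.
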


\begin{proof}
	From Sobolev's embedding theorem (cf. \cite{Niren59}), it follows that
	\begin{align*}
	&\norm{D^2Z(\zeta,\dot{\zeta})((y,\dot{y}),(y,\dot{y}))}_s\\
	\ls  &C\left(\sum_{k=0}^{\max(s,2)}\norm{D^m(y,\dot{y})}_{0}^{k/m}\norm{(y,\dot{y})}_{0}^{1-k/m}\right)\\
	&\cdot\left(\sum_{l=0}^{s}\norm{D^{m}(y,\dot{y})}_{0}^{l/(m-1)}\norm{D(y,\dot{y})}_{0}^{1-l/(m-1)}\right)+Cb^2\norm{y}_{s+2}\\
	\ls &C\norm{(y,\dot{y})}_{0}\frac{1-\norm{D^m(y,\dot{y})}_{0}^{(\max(s,2)+1)/m}\norm{(y,\dot{y})}_{0}^{-(\max(s,2)+1)/m}}{1-\norm{D^m(y,\dot{y})}_{0}^{1/m}\norm{(y,\dot{y})}_{0}^{-1/m}}\\
	&\cdot \left(\sum_{l=0}^{s}\norm{D^{m}(y,\dot{y})}_{0}^{l/(m-1)}\left(\norm{D^{m}(y,\dot{y})}_{0}^{1/m}\norm{(y,\dot{y})}_{0}^{1-1/m}\right)^{1-l/(m-1)}\right)+Cb^2\norm{y}_{s+2}\\
	\ls &C\norm{(y,\dot{y})}_{0}^{2-1/m}\frac{1-\norm{D^m(y,\dot{y})}_{0}^{(\max(s,2)+1)/m}\norm{(y,\dot{y})}_{0}^{-(\max(s,2)+1)/m}}{1-\norm{D^m(y,\dot{y})}_{0}^{1/m}\norm{(y,\dot{y})}_{0}^{-1/m}}\\
	&\cdot \norm{D^{m}(y,\dot{y})}_{0}^{1/m} \left(\sum_{l=0}^{s}\norm{D^{m}(y,\dot{y})}_{0}^{l/m}\norm{(y,\dot{y})}_{0}^{-l/m}\right)+Cb^2\norm{y}_{s+2}\\
	\ls &C\norm{(y,\dot{y})}_{0}^{2-1/m}\norm{D^{m}(y,\dot{y})}_{0}^{1/m}(1-\norm{D^m(y,\dot{y})}_{0}^{1/m}\norm{(y,\dot{y})}_{0}^{-1/m})^{-2}\\
	&(1-\norm{D^m(y,\dot{y})}_{0}^{(\max(s,2)+1)/m}\norm{(y,\dot{y})}_{0}^{-(\max(s,2)+1)/m})\\
	&(1-\norm{D^m(y,\dot{y})}_{0}^{(s+1)/m}\norm{(y,\dot{y})}_{0}^{-(s+1)/m})+Cb^2\norm{y}_{s+2}\\
	\ls &C\norm{(y,\dot{y})}_{0}^{2-1/m}\norm{D^{m}(y,\dot{y})}_{0}^{1/m}+Cb^2\norm{y}_{s+2},
	\end{align*}
	provided that $\norm{D^{m}(y,\dot{y})}_{0}\ls C_m \norm{(y,\dot{y})}_{0}$.
\end{proof}

\subsection{Decomposition of solutions}

We decompose $y=y_n$ into three parts as that in \cite{Ebin}, each of which will be estimated separately. 

Let $q=:y-\nb \Delta^{-1}\dv y$, the divergence free part of $y$,  and $h$ be a harmonic function satisfying 
\begin{align*}
\langle \nb h,\nu\rangle=\langle q,\nu\rangle, \text{ on } \D\Omega.
\end{align*}
Define
\begin{align}\label{N.def}
N=y-\nb h.
\end{align}
Note that, $h$, as a harmonic function,  is the real part of some holomorphic function $\varphi(z)$.  Therefore, it can be written as  $h=\RE\sum_{j=0}^\infty a_j z^j$. Set $g=\RE\sum_{j=0}^{n-1} a_j z^j$ and $f=h-g=\RE\sum_{j=n}^\infty a_j z^j$. We then decompose $y$ as
\begin{align}\label{y.decomp}
y=\nb f+\nb g+N.
\end{align}
Denote  $\proj_i$, $i=1,2,3$  the projection onto the $i$-th summand of \eqref{y.decomp}.

It is shown in \cite{Ebin} that the summands of \eqref{y.decomp} are orthogonal with respect to the $L^2(\Omega)$ inner product. 

Following that in \cite{Ebin}, the next step is to  decompose equation \eqref{y.eqn}. Denote, for simplicity, 
\begin{align*}
Q=Q(y,\dot{y}):=\int_0^1(1-s)\left(\int_0^s D^2Z(\zeta(\sigma),\dot{\zeta}(\sigma))((y,\dot{y}),(y,\dot{y}))d\sigma\right)ds,
\end{align*}
and $Q_i=\proj_iQ$ for $i=1,2,3$. Whence, 
\begin{align}\label{ydd}
\ddot{y}=DZ(\eta,\dot{\eta})(y,\dot{y})+Q.
\end{align}

From \eqref{weta}, it follows
\begin{align*}
DZ(\eta,\dot{\eta})(y,\dot{y})=&(1-b^2)\big(\lsi[{[u\cdot\nb,\nb\Delta^{-1}]}]{\eta}(-2)-\lsi[\nb]{\eta}\lsi[{(\h(u\cdot\nb)\Delta^{-1})}]{\eta}(-2)\big)\no\\
&+2\lsi[\nb]{\eta}\lsi[\Delta]{\eta}^{-1}\tr\left(Du\circ\eta+\im e^{-\im t}D\dot{y}\right)-\lsi[\nb]{\eta}\lsi[\Delta]{\eta}^{-1}(y^i_{,kl}\Re_\eta(H^k_0H^l_0)_{,i})\no\\
&-e^{\im t}\lsi[\nb]{\eta}\lsi[\Delta]{\eta}^{-1}(2y^j_{,k}\Re_\eta( H^{i}_{0,j} H^k_{0,i}))+b^2 e^{\im t}\eta\no\\
=&-(1-b^2)y+\tilde{\A}y+(\nb\Delta^{-1})_\eta\tr M+b^2e^{\im t}\eta,
\end{align*}
where $\tilde{\A}y=(\nb\h)_\eta(\langle y,\eta\rangle)$ which depends on $\eta$, and 
\begin{align}\label{M}
M=2D_\eta N+2\im e^{-\im t}D\dot{N}-N^i_{,kl}\Re_\eta(H^k_0H^l_0)_{,i'}-2e^{\im t}N^j_{,k}\Re_\eta( H^{i}_{0,j'} H^k_{0,i}).
\end{align}
In fact, by \eqref{N.def}, we have $y=\nb h+N$ with $\Delta h=0$. In view of Lemma \ref{lem.4.19},  there exists a harmonic function $\tilde{h}$ such that $\nb h\circ \eta=\nb \tilde{h}$. Since $\eta^{-1}(t,z)=e^{-\im t}z=e^{-2\im t}\eta$, we have $\D_i h(z)=\D_i\tilde{h}(e^{-2\im t}\eta(z))$. Thus,
$$\D_k\D_i h(z)=\D_i\D_m\tilde{h}(e^{-2\im t}\eta)e^{-2\im t}\eta^m_{,k}=e^{-\im t}\D_i\D_k\tilde{h}(e^{-2\im t}\eta),$$ 
and  
$$\D_l\D_k\D_i h(z)=e^{-2\im t}\D_l\D_k\D_i\tilde{h}(e^{-2\im t}\eta).$$ Therefore,  as in \eqref{wH.1} and \eqref{wH.2}, it yields $$\sum_i\D_ih_{,kl}\Re_\eta(H^k_0H^l_0)_{,i}=0, \text{ and } \sum_j\D_j h_{,k}\Re_\eta( H^{i}_{0,j} H^k_{0,i})=0.$$ 
Also,  $\tr D(\nb h(\eta))=\tr e^{\im t}D\nb h(\eta)=0$ and $\tr(\im e^{-\im t}D\nb \dot{h})=0$ due to $\Delta h=0$.

Therefore,  
\begin{align}\label{y.eq}
\ddot{y}=(b^2-1)y+\tilde{\A}y+(\nb\Delta^{-1})_\eta\tr M+b^2e^{\im t}\eta+Q.
\end{align}

Applying $\proj_3$ to \eqref{y.eq}, noticing that $\proj_3\tilde{A}y=0$, $\proj_3(\nb\h)_\eta=0$ and $ (\proj_1+\proj_2)(\nb\Delta^{-1})_\eta=0$, we obtain, in view of \eqref{M},
\begin{align}\label{eq.N}
\ddot{N}=&(b^2-1)N+(\nb\Delta^{-1})_\eta\tr M+b^2e^{\im t}\eta+Q_3\no\\
=&(b^2-1)N+2(\nb\Delta^{-1})_\eta\dv_\eta N+2\im e^{-\im t}(\nb\Delta^{-1})_\eta\tr(D\dot{N})+b^2e^{\im t}\eta+Q_3\no\\
&-(\nb\Delta^{-1})_\eta(N^i_{,kl}\Re_\eta(H^k_0H^l_0)_{,i})-2e^{\im t}(\nb\Delta^{-1})_\eta (N^j_{,k}\Re_\eta( H^{i}_{0,j} H^k_{0,i}))\no\\
=:&(b^2-1)N+B_1N+B_2\dot{N}+b^2e^{\im t}\eta+Q_3+B_3N+B_4N,
\end{align}
where we have used $\proj_3\eta=\eta$ since 
$$\nb\Delta^{-1}\dv\eta=e^{\im t}\nb\Delta^{-1}\dv(x_1,x_2)=e^{\im t}\nb\Delta^{-1}(2)=e^{\im t}(x_1,x_2)=\eta.$$

Applying $\proj_1+\proj_2$ to \eqref{y.eq} to obtain
\begin{align*}
\ddot{\nb} f+\ddot{\nb} g=&(b^2-1)(\nb f+\nb g)+\tilde{\A}(\nb f+\nb g)+\tilde{\A}N+Q_1+Q_2.
\end{align*}
The terms $(b^2-1)(\nb f+\nb g)$ and $\tilde{\A}(\nb f+\nb g)$ were computed in \eqref{eqf} and \eqref{eqg}, where they were written as $(1-b^2)\A(\nb f+\nb g)+b^2 e^{it}\eta$. Since $\A \bar{z}^k=(k-1)\bar{z}^k$,  $\A$ commutes with $\proj_j$ for $j=1$ or $2$. Denote $\tilde{\A}_j=\proj_j \tilde{\A}$ for $j=1$ or $2$. Thus, in view of the fact $\proj_j\eta=\eta$ for  $j=1$ or $2$ due to $\proj_3\eta=\eta$, we get
\begin{align}
\ddot{\nb} f=&(1-b^2)\A\nb f+\tilde{\A}_1N+Q_1,\label{eq.f}\\
\ddot{\nb} g=&(1-b^2)\A\nb g+\tilde{\A}_2N+Q_2.\label{eq.g}
\end{align}
Thus,  \eqref{ydd} has been  decomposed into the system \eqref{eq.N}, \eqref{eq.f} and \eqref{eq.g}.

\subsection{Estimates of $\nb f$, $\nb g$ and $N$}

Let $y_n=\nb f_n+\nb g_n+N_n$ be the sequence of solutions with the initial data $y_n(0)=0$, $\dot{y}_n(0)=e^{-n^{1/4}}\bar{z}^n$. For any harmonic function $h$ and any real $s$, we introduce the equivalent form of the usual $H^s$-norm, i.e.,
\begin{align}\label{snorm}
\norm{\nb h}_s=\left(\A^s\nb h,\A^s\nb h\right)^{\frac{1}{2}},
\end{align}
where $\A^s$ is defined by $\A^s\bar{z}^k=(k-1)^s\bar{z}^k$. Then, it implies that for any $s$,
\begin{align}\label{eq.532}
\norm{\A\nb g}_s\ls (n-1)\norm{\nb g}_s.
\end{align}

For $\mu,\nu\gs 1$ and $\sigma\gs 2$, denote
\begin{align*}
\begin{aligned}
E_{\mu,b}^\pm=&\norm{\dot{\nb}f\pm \sqrt{1-b^2}\A^{\frac{1}{2}}\nb f}_\mu^2=\norm{\A^\mu(\dot{\nb}f\pm \sqrt{1-b^2}\A^{\frac{1}{2}}\nb f)}_0^2,\\
E_{\mu,b}=&E_{\mu,b}^++E_{\mu,b}^-,\\
F_\sigma=&n\norm{N}_\sigma^2+\norm{\dot{N}}_\sigma^2,\\
G_\nu=&\norm{\dot{\nb}g}_\nu^2+\norm{\A^{\frac{1}{2}}\nb g}_\nu^2.
\end{aligned}
\end{align*}
It is follows from \eqref{snorm} the inequality
\begin{align}\label{rela.E}
E_{\mu,b}^\pm\gs n^{2(\mu-\nu)}E_{\nu,b}^\pm, \quad \text{for } \mu\gs \nu.
\end{align}

\begin{proposition}\label{prop.2}
	Let $\mu\gs 2$. For sufficiently large $n$, the set $E_{\mu,b}^+\gs E_{\mu,b}^-$, $E_{\mu,b}^+\gs \sqrt{n}F_{\mu+1}$, and $E_{\mu+\frac{1}{4},b}^+\gs \frac{2\sqrt{1-b^2}}{(2-b^2)}n^{3/4}G_\mu$ is invariant under the evolution defined by \eqref{eq.N}, \eqref{eq.f} and \eqref{eq.g}. Of course $E_{\mu,b}\ls 2E_{\mu,b}^+$.
\end{proposition}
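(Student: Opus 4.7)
The plan is to run a continuity / invariant-region argument: assume the three inequalities hold strictly on a maximal sub-interval $[0,t_*)\subset[0,T]$, and show that on the boundary of the invariant set each of the three defining inequalities has non-negative time derivative along the flow, so that $t_*=T$. Denote the three defect functions by
\begin{align*}
\Phi_1 := E^+_{\mu,b}-E^-_{\mu,b},\qquad
\Phi_2 := E^+_{\mu,b}-\sqrt{n}\,F_{\mu+1},\qquad
\Phi_3 := E^+_{\mu+\frac14,b}-\tfrac{2\sqrt{1-b^2}}{2-b^2}\,n^{3/4}G_\mu.
\end{align*}

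The first step is to set up Riemann-invariant identities for the $f$-equation \eqref{eq.f}. Writing $P^\pm:=\dot{\nb}f\pm\sqrt{1-b^2}\,\A^{1/2}\nb f$, the equation factorizes as $\dot{P}^\pm=\pm\sqrt{1-b^2}\,\A^{1/2}P^\pm+\tilde{\A}_1 N+Q_1$, so
\begin{align*}
\tfrac{d}{dt}E^\pm_{\mu,b} = \pm 2\sqrt{1-b^2}\,(\A^{1/2}P^\pm,P^\pm)_\mu + 2(\tilde{\A}_1 N+Q_1,P^\pm)_\mu.
\end{align*}
Because $\nb f$ is supported on the modes $\bar z^k$ with $k\gs n$, on which $\A^{1/2}$ acts by $\sqrt{k-1}\gs\sqrt{n-1}$, the main terms yield
\begin{align*}
\tfrac{d}{dt}E^+_{\mu,b}\gs 2\sqrt{1-b^2}\sqrt{n-1}\,E^+_{\mu,b}-CR_1,\qquad
\tfrac{d}{dt}E^-_{\mu,b}\ls -2\sqrt{1-b^2}\sqrt{n-1}\,E^-_{\mu,b}+CR_1,
\end{align*}
with a coupling remainder $R_1$ bounded by $(\|N\|_{\mu+1}+\|Q_1\|_\mu)\sqrt{E^\pm_{\mu,b}}$ (using that $\tilde{\A}$ loses only one derivative and that $\h$ gains smoothing). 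An entirely analogous calculation for \eqref{eq.g} gives
\begin{align*}
\tfrac{d}{dt}G_\mu = 2(2-b^2)(\A\nb g,\dot{\nb}g)_\mu + 2(\tilde{\A}_2 N+Q_2,\dot{\nb}g)_\mu,
\end{align*}
where the leading term is bounded by $2(2-b^2)\sqrt{n-1}\,G_\mu$ since $\nb g$ is supported on modes of degree $<n$. Finally, from \eqref{eq.N} and the fact that $B_1,B_2,B_3,B_4$ are of order at most one in $N$, I would derive a schematic bound of the form $\tfrac{d}{dt}F_{\mu+1}\ls C\sqrt{F_{\mu+1}}(\sqrt{F_{\mu+1}}+1+\|Q_3\|_{\mu+1})$, the unit-order term coming from the inhomogeneity $b^2 e^{\im t}\eta$.

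The second step is to verify each of $\dot\Phi_j\gs 0$ on $\Phi_j=0$. For $\Phi_1$, combining the identities for $E^\pm_{\mu,b}$ the leading contribution is $2\sqrt{1-b^2}\sqrt{n-1}(E^+_{\mu,b}+E^-_{\mu,b})$, which swamps the coupling $R_1$ once we use the hypothesis $E^+_{\mu,b}\gs\sqrt{n}F_{\mu+1}$ to estimate $\|N\|_{\mu+1}\ls n^{-1/2}\sqrt{F_{\mu+1}}\ls n^{-3/4}\sqrt{E^+_{\mu,b}}$. For $\Phi_2$, the $E^+_{\mu,b}$-derivative grows at least like $\sqrt{n}\,E^+_{\mu,b}$ while $\sqrt{n}\,\dot F_{\mu+1}\ls C\sqrt{n}\,F_{\mu+1}\ls C\,E^+_{\mu,b}$, leaving a $\sqrt{n}$-gap to absorb the coupling. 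For $\Phi_3$ the number $\tfrac{2\sqrt{1-b^2}}{2-b^2}$ is deliberately chosen as the ratio of the sharp growth rate $2\sqrt{1-b^2}\sqrt{n}$ of $E^+$ to the sharp growth rate $2(2-b^2)\sqrt{n}$ of $G$; the extra index $\mu+\tfrac14$ provides, via \eqref{rela.E}, an additional factor $n^{1/4}$ that absorbs the cross terms $\tilde{\A}_2 N+Q_2$.

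The main obstacle is controlling the nonlinear remainders $Q_i$. The Corollary to Proposition \ref{prop.1} gives $\|D^2Z((y,\dot y),(y,\dot y))\|_s\ls C\|(y,\dot y)\|_0^{2-1/m}\|D^m(y,\dot y)\|_0^{1/m}+Cb^2\|y\|_{s+2}$, provided one has the high-regularity bound $\|D^m(y,\dot y)\|_0\ls C_m\|(y,\dot y)\|_0$. To use this, I would bootstrap a separate high-regularity estimate from the initial $C^\infty$-smallness $\dot y_n(0)=e^{-n^{1/4}}\bar z^n$, yielding an a priori bound of the form $\|D^m y_n\|_0\ls e^{C_m t}$ on $[0,T]$ for any fixed $m$. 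Combined with the smallness of $\|(y_n,\dot y_n)\|_0$ at time $0$ and energy-type control on $[0,t_*)$, this makes each $Q_i$ contribution of order $e^{-c n^{1/4}}$, hence negligible against the polynomial-in-$n$ leading terms for large $n$. The delicate point is synchronizing this high-regularity estimate with the invariant region so that $(\zeta_n,\dot\zeta_n)$ remains in the $H^{s+3}$-neighbourhood of $(\eta,\dot\eta)$ in which the constant $C$ of Proposition \ref{prop.1} is uniform; once this is done, each $\dot\Phi_j$ is manifestly non-negative on $\Phi_j=0$ for $n$ sufficiently large, which gives the claimed invariance.
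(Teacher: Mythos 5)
Your structure — treating the three inequalities as an invariant region and running a continuity argument, together with the Riemann-invariant reformulation $P^\pm=\dot\nb f\pm\sqrt{1-b^2}\,\A^{1/2}\nb f$ — matches the paper's computation, and the leading-order growth rates for $E^\pm$ and $G$ are identified in essentially the same way (modulo a factor-two slip: the sharp rate for $G$ is $(2-b^2)\sqrt{n}$, not $2(2-b^2)\sqrt{n}$; your expression $2(2-b^2)(\A\nb g,\dot\nb g)_\mu$ is correct, but the Cauchy–Schwarz bound $\tfrac12\sqrt{n-1}G_\mu$ already contains the $\tfrac12$, and carrying your overcount forward would produce a constant $\tfrac{\sqrt{1-b^2}}{2-b^2}$ inconsistent with the statement).

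The genuine gap is in the treatment of the remainders $Q_i$. Your plan is to ``bootstrap a separate high-regularity estimate \dots yielding an a priori bound of the form $\|D^m y_n\|_0\ls e^{C_m t}$ on $[0,T]$'' and conclude that each $Q_i$ is $O(e^{-cn^{1/4}})$ in an absolute sense. Such a bound cannot hold: the very content of the theorem (and of \eqref{soln}) is that $y_n$ grows like $e^{c\sqrt{n}\,t}$ in $H^m$ the instant $t>0$, so no $n$-uniform bound $\|D^m y_n(t)\|_0\ls e^{C_m t}$ is available, and the product $\|(y_n,\dot y_n)\|_0^{2-1/m}\|D^m(y_n,\dot y_n)\|_0^{1/m}$ in the Corollary does not stay of size $e^{-cn^{1/4}}$ after time zero. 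The mechanism that actually closes the argument in the paper is \emph{relative} rather than absolute smallness of $Q_i$: Proposition \ref{prop.1} (via Lemma \ref{lem}) bounds $\|Q_i\|_\mu$ by low-index quantities $\|(y,\dot y)\|_1$ raised to powers close to~$1$; the invariant conditions $E^+_{\mu,b}\gs\sqrt{n}F_{\mu+1}$ and $E^+_{\mu+\frac14,b}\gs\tfrac{2\sqrt{1-b^2}}{2-b^2}n^{3/4}G_\mu$ convert $F$- and $G$-control into $E$-control; and then \eqref{rela.E} trades the low index $1$ for the high index $\mu+\tfrac14$ at the price of a factor $n^{3/2-2\mu}$, which for $\mu\gs2$ decays and renders $\|Q_i\|_\mu$ negligible compared with the leading term $E^+_{\mu+\frac14,b}$. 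No bootstrapped growth bound in $t$ is needed; the $n$-decay comes from the index gap, not from the initial smallness of the data persisting in time.

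You also pass over the most delicate point in verifying $\dot\Phi_3\gs0$: after using $\Phi_3=0$, the two leading terms $2\sqrt{1-b^2}E^+_{\mu+\frac12,b}$ and the $G$-contribution almost cancel, leaving a good coefficient of size $2\sqrt{1-b^2}\bigl(1-\sqrt{(n-1)/n}\bigr)\sim n^{-1}$ in front of $E^+_{\mu+\frac12,b}$. It is against this $O(n^{-1})$ margin that the $Q_1,Q_2$ contributions must be measured, and this is precisely where the hypothesis $\mu\gs2$ enters (one needs $n^{13/8-\mu}\to0$); a ``$\sqrt{n}$-gap'' or ``$n^{1/4}$ factor'' argument of the kind that works for $\Phi_1,\Phi_2$ is not available for $\Phi_3$.
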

\begin{proof}
	From \eqref{eq.f},
	\begin{align}\label{dotE}
	\dot{E}_{\mu,b}^\pm=&2(\ddot{\nb}f\pm \sqrt{1-b^2}\A^{\frac{1}{2}}\dot{\nb} f,\dot{\nb}f\pm \sqrt{1-b^2}\A^{\frac{1}{2}}\nb f)_\mu\no\\
	=&2((1-b^2)\A\nb f+\tilde{\A}_1N+Q_1\pm \sqrt{1-b^2}\A^{\frac{1}{2}}\dot{\nb} f,\dot{\nb}f\pm \sqrt{1-b^2}\A^{\frac{1}{2}}\nb f)_\mu\no\\
	=&2(\sqrt{1-b^2}\A^{\frac{1}{2}}(\sqrt{1-b^2}\A^{\frac{1}{2}}\nb f\pm \dot{\nb} f),\dot{\nb}f\pm \sqrt{1-b^2}\A^{\frac{1}{2}}\nb f)_\mu\no\\
	&+2(\tilde{\A}_1N+Q_1,\dot{\nb}f\pm \sqrt{1-b^2}\A^{\frac{1}{2}}\nb f)_\mu\no\\
	=&\pm2\sqrt{1-b^2}E_{\mu+\frac{1}{4},b}^\pm+2(\tilde{\A}_1N+Q_1,\dot{\nb}f\pm \sqrt{1-b^2}\A^{\frac{1}{2}}\nb f)_\mu.
	\end{align}
	Hence,
	\begin{align}\label{Epm}
	\dot{E}_{\mu,b}^+-\dot{E}_{\mu,b}^-=&2\sqrt{1-b^2}\left(E_{\mu+\frac{1}{4},b}^++E_{\mu+\frac{1}{4},b}^-\right)\no\\
	&+2(\tilde{\A}_1N+Q_1,\dot{\nb}f+ \sqrt{1-b^2}\A^{\frac{1}{2}}\nb f)_\mu\no\\
	&-2(\tilde{\A}_1N+Q_1,\dot{\nb}f- \sqrt{1-b^2}\A^{\frac{1}{2}}\nb f)_\mu\no\\
	\gs &2\sqrt{1-b^2}E_{\mu+\frac{1}{4},b}-2\sqrt{E_{\mu,b}}(K\norm{N}_{\mu+1}+\norm{Q_1}_\mu)\no\\
	\gs &2\sqrt{1-b^2}E_{\mu+\frac{1}{4},b}-2\sqrt{E_{\mu,b}}(K\sqrt{F_{\mu+1}/n}+\norm{Q_1}_\mu)\no\\
	\gs &2\sqrt{1-b^2}E_{\mu+\frac{1}{4},b}-2Kn^{-5/4}E_{\mu+\frac{1}{4},b}-2n^{-1/4}\sqrt{E_{\mu+\frac{1}{4},b}}\norm{Q_1}_\mu.
	\end{align}
	By Proposition~\ref{prop.1} and Lemma \ref{lem}, 
	\begin{align*}
	\norm{Q_1}_\mu\ls & C \eps_{n,\mu}^{(2\mu+1)/2\mu}\norm{(y,\dot{y})}_1^{(2\mu-1)/2\mu}+C\eps_{n,\mu}\norm{(y,\dot{y})}_1+Cb^2\norm{y}_{1}\no\\
	\ls & C \eps_{n,\mu}(E_1+F_2+G_1)^{(2\mu-1)/4\mu}+C(\eps_{n,\mu}+b^2)(E_1+F_2+G_1)^{1/2}\no\\
	\ls &C \eps_{n,\mu}(E_{1,b}+n^{-3/4}E_{\frac{5}{4},b}^+)^{(2\mu-1)/4\mu}\no\\
	&+C(\eps_{n,\mu}+b^2)(E_{1,b}+n^{-3/4}E_{\frac{5}{4},b}^+)^{1/2}\no\\
	\ls &C \eps_{n,\mu}(n^{3/2-2\mu}E_{\mu+\frac{1}{4},b}^+)^{(2\mu-1)/4\mu}\no\\
	&+C(\eps_{n,\mu}+b^2)(n^{3/2-2\mu}E_{\mu+\frac{1}{4},b}^+)^{1/2},
	\end{align*}
	where $\eps_{n,\mu}$  can be any sequence of positive constants such that $\lim_{n\to\infty}\eps_n=0$. So,
	\begin{align*}
	\dot{E}_{\mu,b}^+-\dot{E}_{\mu,b}^-
	\gs &2(\sqrt{1-b^2}-Kn^{-5/4}-C(\eps_{n,\mu}+b^2)n^{1/2-\mu})E_{\mu+\frac{1}{4},b}^+\no\\
	&-2C \eps_{n,\mu}n^{-1/4-(2\mu-3/2)(2\mu-1)/4\mu}(E_{\mu+\frac{1}{4},b}^+)^{(4\mu-1)/4\mu},
	\end{align*}
	which is positive for $n$ large. Then, one obtains easily that $E_{\mu,b}\ls 2E_{\mu,b}^+$.
	
	In view of \eqref{eq.N}, we have
	\begin{align*}
	\dot{F}_\sigma=&2n(\dot{N},N)_\sigma+2(\ddot{N},\dot{N})_\sigma\no\\
	=&2(\dot{N},(n-1+b^2)N+B_1N+B_2\dot{N}+b^2e^{\im t}\eta+Q_3+B_3N+B_4N)_\sigma\no\\
	=&2(n-1+b^2)(\dot{N},N)_\sigma+2(\dot{N},B_1N)_\sigma+2(\dot{N},B_2\dot{N})_\sigma+2(\dot{N},B_3N)_\sigma\no\\
	&+2(\dot{N},B_4N)_\sigma+2(\dot{N},Q_3)_\sigma+2b^2e^{2\im t}(\dot{N},(x_1,x_2))_\sigma.
	\end{align*}
	
	It holds that
	\begin{align*}
	|(\dot{N},N)_\sigma|=&|(\A^\sigma\dot{N},\A^\sigma N)|\ls \norm{\A^\sigma\dot{N}}_0\norm{\A^\sigma N}_0
	= \frac{1}{2\sqrt{n}}2\sqrt{n}\norm{\dot{N}}_\sigma\norm{ N}_\sigma\no\\
	\ls& \frac{1}{2\sqrt{n}}(\norm{\dot{N}}_\sigma^2+n\norm{ N}_\sigma^2)=\frac{1}{2\sqrt{n}}F_\sigma,
	\end{align*}
	whereby
	\begin{align*}
	2(n-1+b^2)|(\dot{N},N)_\sigma|\ls \frac{(n-1+b^2)}{\sqrt{n}}F_\sigma\ls \sqrt{n-1+b^2}F_\sigma.
	\end{align*}
	Note that the operator $\nb\Delta^{-1}\dv$ is bounded in the $H^\sigma$-norm, 
	\begin{align*}
	2|(\dot{N},B_1N)_\sigma|=&4|(\A^\sigma \dot{N},\A^\sigma (\nb\Delta^{-1})_\eta\dv_\eta N)|\ls 4C\norm{\dot{N}}_\sigma\norm{ N}_\sigma\ls  Kn^{-1/2}F_\sigma.
	\end{align*}
	Similarly, it holds that
	\begin{align*}
	2|(\dot{N},B_2\dot{N})_\sigma|=2|(\dot{N},2\im e^{-\im t}(\nb\Delta^{-1})_\eta\tr(D\dot{N}))_\sigma|\ls 4C\norm{\dot{N}}_\sigma^2\ls KF_\sigma,
	\end{align*}
	and 
	\begin{align*}
	2|(\dot{N},B_4N)_\sigma|=&4|(\dot{N},(\nb\Delta^{-1})_\eta (N^j_{,k}\Re_\eta( H^{i}_{0,j} H^k_{0,i})))_\sigma|\ls Cb^2n^{-1/2}F_\sigma,
	\end{align*}
	since $H_{0,j}^i=(j-i)b$ for $i,j\in\{1,2\}$ implies $H^{i}_{0,j} H^k_{0,i}=(j-i)(i-k)b^2$. 
	Also, 
	\begin{align*}
	2|(\dot{N},B_3N)_\sigma|=&2|(\dot{N},(\nb\Delta^{-1})_\eta(N^i_{,kl}\Re_\eta(H^k_0H^l_0)_{,i}))_\sigma|\no\\
	=&4b|(i-k)(\dot{N},(\nb\Delta^{-1})_\eta(N^i_{,kl}\Re_\eta H^l_0))_\sigma|\no\\
	=&4b^2|(i-k)(\dot{N},(\nb\Delta^{-1})_\eta((-x_2,x_1)\cdot \nb N^i_{,k}))_\sigma|\no\\
	\ls &Cb^2n^{-1/2}F_{\sigma+\frac{1}{2}}.
	\end{align*}
	Due to $\A (x_1,x_2)=(x_1,x_2)$ and the boundedness of $\Omega$, 
	\begin{align*}
	|(\dot{N},(x_1,x_2))_\sigma|\ls C\norm{\dot{N}}_\sigma\ls KF_\sigma^{1/2}.
	\end{align*}
	Thus, 
	\begin{align}\label{dotF}
	\dot{F}_\sigma\ls (\sqrt{n-1+b^2}+K)F_\sigma +2\sqrt{F_\sigma}(\norm{Q_3}_\sigma+Cb^2)+Cb^2n^{-1/2}F_{\sigma+\frac{1}{2}}.
	\end{align}
	
	Since
	\begin{align*}
	G_\nu=(\A^\nu\dot{\nb}g,\A^\nu\dot{\nb}g)+(\A^{\nu+1/2}\nb g,\A^{\nu+1/2}\nb g),
	\end{align*}
	and \eqref{eq.g}, 
	\begin{align*}
	\dot{G}_\nu=&2(\A^\nu\ddot{\nb}g,\A^\nu\dot{\nb}g)+2(\A^{\nu+1/2}\nb \dot{g},\A^{\nu+1/2}\nb g)\\
	=&2((2-b^2)\A^{\nu+1}\nb g+\A^\nu\tilde{\A}_2N+\A^\nu Q_2,\A^\nu\dot{\nb}g).
	\end{align*}
	
	By \eqref{eq.532}, one has
	\begin{align*}
	(\A^{\nu+1}\nb g,\A^\nu\dot{\nb}g)\ls &  \norm{\A\nb g}_\nu\norm{\dot{\nb}g}_\nu\ls \sqrt{n-1}\norm{\A^{1/2}\nb g}_\nu\norm{\dot{\nb}g}_\nu\no\\
	\ls &\frac{1}{2}\sqrt{n-1}(\norm{\A^{1/2}\nb g}_\nu^2+\norm{\dot{\nb}g}_\nu^2)=\frac{1}{2}\sqrt{n-1}G_\nu.
	\end{align*}
	Noting that $\tilde{\A}_2N=\proj_2\tilde{\A}N=\proj_2(\nb\h)_\eta(N,\eta)$, one has for some constant $K>0$,
	\begin{align*}
	\norm{\tilde{\A}_2N}_\nu\ls K\norm{N}_{\nu+1}.
	\end{align*}
	This implies  
	\begin{align}\label{est.dotG}
	\dot{G}_\nu\ls& (2-b^2)\sqrt{n-1}G_\nu +2K\norm{N}_{\nu+1}\norm{\dot{\nb}g}_\nu +2\norm{Q_2}_\nu\norm{\dot{\nb}g}_\nu\no\\
	\ls & (2-b^2)\sqrt{n-1}G_\nu+2\sqrt{G_\nu}(K\sqrt{F_{\nu+1}/n}+\norm{Q_2}_\nu).
	\end{align}
	
	It follows from  \eqref{dotE}, \eqref{dotF},  \eqref{rela.E} and \eqref{eq.532}, that
	\begin{align}
	&(\dot{E}_{\mu,b}^+-\sqrt{n}\dot{F}_{\mu+1})\no\\
	\gs& 2\sqrt{1-b^2}E_{\mu+\frac{1}{4},b}^++2(\tilde{\A}_1N+Q_1,\dot{\nb}f+ \sqrt{1-b^2}\A^{\frac{1}{2}}\nb f)_\mu\no\\
	&-\sqrt{n}\left((\sqrt{n-1+b^2}+K)F_{\mu+1} +2\sqrt{F_{\mu+1}}(\norm{Q_3}_{\mu+1}+Cb^2)\right)-Cb^2F_{{\mu}+\frac{3}{2}}\label{dotEF.1}\\
	\gs &2\sqrt{1-b^2}E_{\mu+\frac{1}{4},b}^+-2\sqrt{E_{\mu,b}^+}(\norm{\tilde{\A}_1N}_\mu+\norm{Q_1}_\mu)+\eqref{dotEF.1}\no\\
	\gs &2\sqrt{1-b^2}E_{\mu+\frac{1}{4},b}^+-2K\sqrt{E_{\mu,b}^+}\sqrt{F_{\mu+1}/n}\no\\
	&-2C\sqrt{E_{\mu,b}^+} \eps_{n,\mu}^{(2\mu+1)/2\mu}(E_{1,b}+n^{5/4-2\mu}E_{\mu+\frac{1}{4},b}^+)^{(2\mu-1)/4\mu}\no\\
	&-2C\sqrt{E_{\mu,b}^+}(\eps_{n,\mu}+b^2)(E_{1,b}+n^{5/4-2\mu}E_{\mu+\frac{1}{4},b}^+)^{1/2}\no\\
	&-\sqrt{n}(\sqrt{n-1+b^2}+K)F_{\mu+1}-2\sqrt{n}\sqrt{F_{\mu+1}}\norm{Q_3}_{\mu+1}\no\\
	&-Cb^2(2\sqrt{nF_{\mu+1}}+F_{{\mu}+\frac{3}{2}})\no\\
	\gs &2\sqrt{1-b^2}E_{\mu+\frac{1}{4},b}^+-2Kn^{-3/4}E_{\mu,b}^+\no\\
	&-2C\sqrt{E_{\mu,b}^+} \eps_{n,\mu}^{(2\mu+1)/2\mu}(n^{2(1-\mu)}E_{\mu,b}+n^{5/4-2\mu}E_{\mu+\frac{1}{4},b}^+)^{(2\mu-1)/4\mu}\no\\
	&-2C\sqrt{E_{\mu,b}^+}(\eps_{n,\mu}+b^2)(n^{2(1-\mu)}E_{\mu,b}+n^{5/4-2\mu}E_{\mu+\frac{1}{4},b}^+)^{1/2}\no\\
	&-(\sqrt{n-1+b^2}+K)E_{\mu,b}^+-2n^{1/4}\sqrt{E_{\mu,b}^+}\norm{Q_3}_{\mu+1}\no\\
	&-Cb^2(2n^{1/4}\sqrt{E_{\mu,b}^+}+n^{-1/2}E_{{\mu}+\frac{1}{2},b}^+)\no\\
	\gs &2\sqrt{1-b^2}E_{\mu+\frac{1}{4},b}^+-(\sqrt{n-1+b^2}+K+2Kn^{-3/4})n^{-1/2}E_{\mu+\frac{1}{4},b}^+\no\\
	&-C \eps_{n,\mu}^{(2\mu+1)/2\mu}n^{2(1-\mu)(2\mu-1)/4\mu}(n^{-1/2}E_{\mu+\frac{1}{4},b}^+)^{1-1/4\mu}\no\\
	&-C \eps_{n,\mu}^{(2\mu+1)/2\mu}n^{7/8-\mu-5/16\mu}(E_{\mu+\frac{1}{4},b}^+)^{(2\mu-1)/4\mu}\no\\
	&-C(\eps_{n,\mu}+b^2)n^{(3-4\mu)/4}E_{\mu+\frac{1}{4},b}^+-2n^{1/4}\sqrt{E_{\mu,b}^+}\norm{Q_3}_{\mu+1}\no\\
	&-Cb^2(2\sqrt{E_{\mu+\frac{1}{4},b}^+}+n^{-1/4}((n-1)/n)^{1/4}E_{{\mu}+\frac{1}{4},b}^+). \label{EF}
	\end{align}
	
	Due to Proposition~\ref{prop.1} and Lemma \ref{lem}, 
	\begin{align}\label{Q3}
	\norm{Q_3}_{\mu+1}\ls & C \eps_{n,\mu}^{(2\mu+3)/2(\mu+1)}\norm{(y,\dot{y})}_1^{(2\mu+1)/2(\mu+1)}+C\eps_{n,\mu}\norm{(y,\dot{y})}_{1}+Cb^2\norm{y}_{1}\no\\
	\ls & C \eps_{n,\mu}^{(2\mu+3)/2(\mu+1)}(E_{1,b}+F_2+G_1)^{(2\mu+1)/4(\mu+1)}\no\\
	&+C(\eps_{n,\mu}+b^2)(E_{1,b}+F_2+G_1)^{1/2}\no\\
	\ls &C \eps_{n,\mu}^{(2\mu+3)/2(\mu+1)}(E_{1,b}+n^{5/4-2\mu}E_{\mu+\frac{1}{4},b}^+)^{(2\mu+1)/4(\mu+1)}\no\\
	&+C(\eps_{n,\mu}+b^2)(E_{1,b}+n^{5/4-2\mu}E_{\mu+\frac{1}{4},b}^+)^{1/2}.
	\end{align}
	Thus,
	\begin{align*}
	&2n^{1/4}\sqrt{E_{\mu,b}^+}\norm{Q_3}_{\mu+1}\no\\
	\ls &C\eps_{n,\mu}^{(2\mu+3)/2(\mu+1)}n^{1/4}\sqrt{E_{\mu,b}^+}(E_{1,b}+n^{5/4-2\mu}E_{\mu+\frac{1}{4},b}^+)^{(2\mu+1)/4(\mu+1)}\no\\
	&+C(\eps_{n,\mu}+b^2)n^{1/4}\sqrt{E_{\mu,b}^+}(E_{1,b}+n^{5/4-2\mu}E_{\mu+\frac{1}{4},b}^+)^{1/2}\no\\
	\ls &C\eps_{n,\mu}^{(2\mu+3)/2(\mu+1)}\sqrt{E_{\mu+\frac{1}{4},b}^+}(n^{3/2-2\mu}E_{\mu+\frac{1}{4},b}+n^{5/4-2\mu}E_{\mu+\frac{1}{4},b}^+)^{(2\mu+1)/4(\mu+1)}\no\\
	&+C(\eps_{n,\mu}+b^2)\sqrt{E_{\mu+\frac{1}{4},b}^+}(n^{3/2-2\mu}E_{\mu+\frac{1}{4},b}+n^{5/4-2\mu}E_{\mu+\frac{1}{4},b}^+)^{1/2}\no\\
	\ls &C\eps_{n,\mu}^{(2\mu+3)/2(\mu+1)}n^{(3-4\mu)(2\mu+1)/8(\mu+1)}(E_{\mu+\frac{1}{4},b}^+)^{1-1/4(\mu+1)}\no\\
	&+C(\eps_{n,\mu}+b^2)n^{(3-4\mu)/4}E_{\mu+\frac{1}{4},b}^+.
	\end{align*}
	Hence,  for large $n$, 
	$$\dot{E}_{\mu,b}^+-\sqrt{n}\dot{F}_{\mu+1}>0. $$
	
	From \eqref{dotE}, \eqref{est.dotG}, we get
	\begin{align}\label{EG}
	&\dot{E}_{\mu+\frac{1}{4},b}^+-\frac{2\sqrt{1-b^2}}{(2-b^2)}n^{3/4}\dot{G}_\mu\no\\
	\gs& 2\sqrt{1-b^2}E_{\mu+\frac{1}{2},b}^++2(\tilde{\A}_1N+Q_1,\dot{\nb}f+ \sqrt{1-b^2}\A^{\frac{1}{2}}\nb f)_{\mu+\frac{1}{4}}\no\\
	&-2\sqrt{1-b^2}n^{3/4}\sqrt{n-1}G_\mu-\frac{4\sqrt{1-b^2}}{(2-b^2)}n^{3/4}\sqrt{G_\mu}(K\sqrt{F_{\mu+1}/n}+\norm{Q_2}_\mu)\no\\
	\gs& 2\sqrt{1-b^2}E_{\mu+\frac{1}{2},b}^+ -2\sqrt{E_{\mu+\frac{1}{4},b}^+}(\sqrt{F_{\mu+\frac{5}{4}}/n}+\norm{Q_1}_{\mu+\frac{1}{4}})\no\\
	&-2\sqrt{1-b^2}\sqrt{n-1}E_{\mu+\frac{1}{4},b}^+-2n^{3/8}\sqrt{E_{\mu+\frac{1}{4},b}^+}(K\sqrt{F_{\mu+1}/n}+\norm{Q_2}_\mu)\no\\
	\gs &2\sqrt{1-b^2}E_{\mu+\frac{1}{2},b}^+ -2(n^{-3/4}E_{\mu+\frac{1}{4},b}^++\sqrt{E_{\mu+\frac{1}{4},b}^+}\norm{Q_1}_{\mu+\frac{1}{4}})\no\\
	&-2\sqrt{1-b^2}\sqrt{n-1}E_{\mu+\frac{1}{4},b}^+-2n^{3/8}\sqrt{E_{\mu+\frac{1}{4},b}^+}(Kn^{-3/4}\sqrt{E_{\mu,b}^+}+\norm{Q_2}_\mu)\no\\
	\gs &2\sqrt{1-b^2}E_{\mu+\frac{1}{2},b}^+ -2(n^{-5/4}E_{\mu+\frac{1}{2},b}^++n^{-1/4}\sqrt{E_{\mu+\frac{1}{2},b}^+}\norm{Q_1}_{\mu+\frac{1}{4}})\no\\
	&-2\sqrt{1-b^2}\sqrt{(n-1)/n}E_{\mu+\frac{1}{2},b}^+-2n^{1/8}\sqrt{E_{\mu+\frac{1}{2},b}^+}(Kn^{-5/4}\sqrt{E_{\mu+\frac{1}{2},b}^+}+\norm{Q_2}_\mu)\no\\
	\gs & (2\sqrt{1-b^2}(1-\sqrt{(n-1)/n})-2n^{-5/4}-2Kn^{-9/8})E_{\mu+\frac{1}{2},b}^+\no\\
	&-2n^{-1/4}\sqrt{E_{\mu+\frac{1}{2},b}^+}\norm{Q_1}_{\mu+\frac{1}{4}}-2n^{1/8}\sqrt{E_{\mu+\frac{1}{2},b}^+}\norm{Q_2}_\mu.
	\end{align}
	
	Using Proposition~\ref{prop.1} and Lemma \ref{lem} to obtain
	\begin{align}\label{Q2}
	\norm{Q_2}_\mu\ls &C \eps_{n,\mu}^{(2\mu+1)/2\mu}(E_{1,b}+n^{-3/4}E_{\frac{5}{4},b}^+)^{(2\mu-1)/4\mu}\no\\
	&+C(\eps_{n,\mu}+b^2)(E_{1,b}+n^{-3/4}E_{\frac{5}{4},b}^+)^{1/2}\no\\
	\ls &C \eps_{n,\mu}^{(2\mu+1)/2\mu}(n^{1-2\mu}E_{\mu+\frac{1}{2},b}^++n^{3/4-2\mu}E_{\mu+\frac{1}{2},b}^+)^{(2\mu-1)/4\mu}\no\\
	&+C(\eps_{n,\mu}+b^2)(n^{1-2\mu}E_{\mu+\frac{1}{2},b}^++n^{3/4-2\mu}E_{\mu+\frac{1}{2},b}^+)^{1/2}\no\\
	\ls &C \eps_{n,\mu}n^{-(2\mu-1)^2/4\mu}(E_{\mu+\frac{1}{2},b}^+)^{(2\mu-1)/4\mu}\no\\
	&+C(\eps_{n,\mu}+b^2)n^{1/2-\mu}(E_{\mu+\frac{1}{2},b}^+)^{1/2}.
	\end{align}
	Similarly,
	\begin{align}\label{Q11}
	\norm{Q_1}_{\mu+\frac{1}{4}}\ls &C \eps_{n,\mu}(E_{1,b}+n^{-3/4}E_{\frac{5}{4},b}^+)^{(4\mu-1)/2(4\mu+1)}\no\\
	&+C(\eps_{n,\mu}+b^2)(E_{1,b}+n^{-3/4}E_{\frac{5}{4},b}^+)^{1/2}\no\\
	\ls &C \eps_{n,\mu} n^{-(2\mu-1)(4\mu-1)/2(4\mu+1)}(E_{\mu+\frac{1}{2},b}^+)^{(4\mu-1)/2(4\mu+1)}\no\\
	&+C(\eps_{n,\mu}+b^2)n^{1/2-\mu}(E_{\mu+\frac{1}{2},b}^+)^{1/2}.
	\end{align}
	Thus, combining \eqref{EG}, \eqref{Q2} and \eqref{Q11}, 
	\begin{align*}
	&\dot{E}_{\mu+\frac{1}{4},b}^+-\frac{2\sqrt{1-b^2}}{(2-b^2)}n^{3/4}\dot{G}_\mu\no\\
	\gs& (2\sqrt{1-b^2}(1-\sqrt{(n-1)/n})-2n^{-5/4}-2Kn^{-9/8})E_{\mu+\frac{1}{2},b}^+\no\\
	&-C \eps_{n,\mu} n^{-1/4-(2\mu-1)(4\mu-1)/2(4\mu+1)}(E_{\mu+\frac{1}{2},b}^+)^{4\mu/(4\mu+1)}\no\\
	&-C(\eps_{n,\mu}+b^2)n^{1/4-\mu}E_{\mu+\frac{1}{2},b}^+-C(\eps_{n,\mu}+b^2)n^{5/8-\mu}E_{\mu+\frac{1}{2},b}^+\no\\
	&-C \eps_{n,\mu}n^{1/8-(2\mu-1)^2/4\mu}(E_{\mu+\frac{1}{2},b}^+)^{(4\mu-1)/4\mu}\no\\
	\gs &\frac{1}{n}\Big[(\sqrt{1-b^2}-n^{-1/4}-Kn^{-1/8}-C(\eps_{n,\mu}+b^2)n^{13/8-\mu})E_{\mu+\frac{1}{2},b}^+\no\\
	&\quad -C\eps_{n,\mu} n^{3/4-(2\mu-1)(4\mu-1)/2(4\mu+1)}(E_{\mu+\frac{1}{2},b}^+)^{4\mu/(4\mu+1)}\no\\
	&\quad-C \eps_{n,\mu}n^{9/8-(2\mu-1)^2/4\mu}(E_{\mu+\frac{1}{2},b}^+)^{(4\mu-1)/4\mu}\Big],
	\end{align*}
	which is positive for large $n$  provided $\mu\gs 2$.
	
	Therefore, we have proved the desired results.
\end{proof}

\begin{theorem}\label{thm.evo}
	Let $\mu\gs 2$ and $|b|\ll 1$. For large $n$, $E_{\mu,b}^+(t)\gs E_{\mu,b}^+(0)e^{\sqrt{1-b^2}\sqrt{n}t}$ for $E$, $F$ and $G$ in the invariant set of Proposition \ref{prop.2}.
\end{theorem}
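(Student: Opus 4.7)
The plan is to derive a differential inequality of the form
$$\dot{E}_{\mu,b}^+(t)\;\gs\;\sqrt{1-b^2}\sqrt{n}\,E_{\mu,b}^+(t)$$
valid as long as the triple $(E,F,G)$ remains in the invariant set identified in Proposition~\ref{prop.2}, and then to close the argument by Gronwall's inequality. Since Proposition~\ref{prop.2} guarantees that the invariant set is preserved under the flow of \eqref{eq.N}--\eqref{eq.g}, it will suffice to establish the differential inequality pointwise in $t$ under the assumption that $E_{\mu,b}^+\gs E_{\mu,b}^-$, $E_{\mu,b}^+\gs\sqrt{n}F_{\mu+1}$ and $E_{\mu+1/4,b}^+\gs \frac{2\sqrt{1-b^2}}{2-b^2}n^{3/4}G_\mu$.

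First I would start from the identity \eqref{dotE} with the $+$ sign,
$$\dot{E}_{\mu,b}^+=2\sqrt{1-b^2}\,E_{\mu+\frac{1}{4},b}^{+}+2\bigl(\tilde{\A}_1N+Q_1,\;\dot{\nb}f+\sqrt{1-b^2}\A^{\frac12}\nb f\bigr)_\mu,$$
and apply Cauchy--Schwarz to the inner product, which gives
$$\dot{E}_{\mu,b}^+\;\gs\;2\sqrt{1-b^2}\,E_{\mu+\frac{1}{4},b}^{+}-2\sqrt{E_{\mu,b}^+}\bigl(\norm{\tilde{\A}_1N}_\mu+\norm{Q_1}_\mu\bigr).$$
The main term is then bounded from below by the interpolation estimate \eqref{rela.E}: since $\mu+\tfrac14-\mu=\tfrac14$,
$$2\sqrt{1-b^2}\,E_{\mu+\frac{1}{4},b}^{+}\;\gs\;2\sqrt{1-b^2}\,\sqrt{n}\,E_{\mu,b}^+.$$

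Next, the error terms must be shown to be of strictly smaller order in $n$. For the linear term, $\norm{\tilde{\A}_1N}_\mu\ls K\norm{N}_{\mu+1}$ by the boundedness of $(\nb\h)_\eta$, and the invariant relation $F_{\mu+1}\ls E_{\mu,b}^+/\sqrt{n}$ gives
$$\norm{\tilde{\A}_1N}_\mu\ls K\sqrt{F_{\mu+1}/n}\ls K n^{-3/4}\sqrt{E_{\mu,b}^+}.$$
For the nonlinear term $\norm{Q_1}_\mu$, I would reuse the estimate already derived in the proof of Proposition~\ref{prop.2}, namely
$$\norm{Q_1}_\mu\ls C\eps_{n,\mu}\bigl(n^{3/2-2\mu}E_{\mu+\frac14,b}^+\bigr)^{(2\mu-1)/4\mu}+C(\eps_{n,\mu}+b^2)\bigl(n^{3/2-2\mu}E_{\mu+\frac14,b}^+\bigr)^{1/2},$$
which via \eqref{rela.E} and the invariant bounds produces, after multiplication by $\sqrt{E_{\mu,b}^+}$, contributions of the form $n^{\alpha}(E_{\mu,b}^+)^{1+\beta}$ with $\alpha$ strictly smaller than $1/2$ and $\beta\gs 0$ vanishing only via higher powers of $E^+_{\mu,b}$; combined with the assumption $|b|\ll 1$ and $\mu\gs 2$, every such contribution is dominated by $\sqrt{1-b^2}\sqrt{n}\,E_{\mu,b}^+$ for $n$ large enough.

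Putting these estimates together yields, for all sufficiently large $n$,
$$\dot{E}_{\mu,b}^+\;\gs\;\sqrt{1-b^2}\,\sqrt{n}\,E_{\mu,b}^+,$$
and integrating via Gronwall gives the claimed bound $E_{\mu,b}^+(t)\gs E_{\mu,b}^+(0)\exp\!\bigl(\sqrt{1-b^2}\sqrt{n}\,t\bigr)$. The main obstacle I anticipate is the bookkeeping of the various powers of $n$ and of $E_{\mu+1/4,b}^+$ arising from $\norm{Q_1}_\mu$: one must verify that, even with the loss incurred by Cauchy--Schwarz and the prefactor $\sqrt{E_{\mu,b}^+}$, every exponent of $n$ in the remainder stays below $1/2$, so that the gap between the leading $\sqrt{n}$ and the error $o(\sqrt{n})$ actually opens up. This is precisely where the hypothesis $\mu\gs 2$ (and the interpolation in Proposition~\ref{prop.1} and Lemma~\ref{lem}) becomes essential, since the relevant exponent scales like $1/\mu$ and must be made small.
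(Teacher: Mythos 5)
Your proposal is correct and follows essentially the same route as the paper's (very terse) proof: start from \eqref{dotE}, bound the error terms by the arguments already used for \eqref{Epm}, and apply \eqref{rela.E} to convert $E^+_{\mu+\frac14,b}$ into $\sqrt{n}\,E^+_{\mu,b}$ before integrating by Gronwall. The paper simply states $\dot E_{\mu,b}^+\gs\sqrt{1-b^2}\,E_{\mu+\frac14,b}^+\gs\sqrt{1-b^2}\,n^{1/2}E_{\mu,b}^+$ and appeals to those earlier estimates; you have unpacked the same estimates explicitly.
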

\begin{proof}
	It follows from \eqref{dotE} and similar arguments for \eqref{Epm}, and \eqref{rela.E} that
	\begin{align*}
	\dot{E}_{\mu,b}^+\gs \sqrt{1-b^2}E_{\mu+\frac{1}{4},b}^+\gs \sqrt{1-b^2}n^{1/2}E_{\mu,b}^+,
	\end{align*}
	which yields the desired estimates.
\end{proof}

\subsection{Ill-posedness}
The ill-posedness theorem is the following:
\begin{theorem}\label{ill-posedness} Suppose $|b|\ll 1$. For the initial data $\Omega_0=\{z\in {\C}, |z|\ls 1\}$,  $\zeta_n(0)= z$ , $\dot{z}_n(0)=e^{-n^{1/4}}\bar{z}^n-\im z$ ($n\gs 2$) and $H_0(z)=\im bz$ for $z\in \Omega_0$,  $\zeta_n(t)$ ($n\gs 2$) be the solution to  problem \eqref{eqn1} in some time interval $[0, T]$. 
	Then, for $\mu\gs 2$,    $\norm{(y_n(0),\dot{y}_n(0))}_\mu\to 0$  as  $\ n\to \infty,$ 
	but for any $t>0$,
	$$\norm{(y_n(t),\dot{y}_n(t))}_\mu\to\infty, \ {\rm as}\ n\to \infty, $$
	where 
	$$y_n(t)=\zeta_n(t)-\eta(t),$$
	and $\eta(t,z)=e^{\im t}z$ is a special solution to  \eqref{eqn1}. 
\end{theorem}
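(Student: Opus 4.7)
The theorem splits into convergence of the initial data, which is immediate, and divergence at $t>0$, which follows by inserting the data into the invariant-cone construction of Proposition~\ref{prop.2} and the exponential lower bound of Theorem~\ref{thm.evo}. For the initial part, $y_n(0)=0$ and $\dot{y}_n(0)=e^{-n^{1/4}}\bar{z}^n$, so using $\A\bar{z}^k=k\bar{z}^k$ together with $\|\bar{z}^n\|_{L^2(\Omega)}^2\sim 1/n$ one obtains $\|\dot{y}_n(0)\|_\mu^2\sim e^{-2n^{1/4}}n^{2\mu-1}\to 0$, since the super-polynomial factor $e^{-2n^{1/4}}$ beats $n^{2\mu-1}$.

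For the divergence, I first decompose $\dot{y}_n(0)$ via \eqref{y.decomp}. Since $\bar{z}^n=(n+1)^{-1}\nb\RE z^{n+1}$ is a harmonic gradient of underlying polynomial degree $n+1\gs n$, it belongs entirely to the $\nb f$-summand. Hence $N_n(0)=\dot{N}_n(0)=0$, $\nb g_n(0)=\nb\dot{g}_n(0)=0$, $\nb f_n(0)=0$ and $\dot{\nb}f_n(0)=e^{-n^{1/4}}\bar{z}^n$; consequently $F_\sigma(0)=G_\nu(0)=0$ for every $\sigma,\nu$, and $E_{\mu,b}^+(0)=E_{\mu,b}^-(0)\sim e^{-2n^{1/4}}n^{2\mu-1}$. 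The three inequalities defining the invariant set of Proposition~\ref{prop.2} therefore hold at $t=0$ (with equality or trivially), so they persist on $[0,T]$, and Theorem~\ref{thm.evo} gives
\[ E_{\mu,b}^+(t)\gs E_{\mu,b}^+(0)\,e^{\sqrt{1-b^2}\sqrt{n}\,t}\gs c\,e^{-2n^{1/4}}n^{2\mu-1}e^{\sqrt{1-b^2}\sqrt{n}\,t}, \]
which tends to $+\infty$ as $n\to\infty$ for every fixed $t>0$, because $\sqrt{n}\,t$ overtakes $2n^{1/4}$.

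It remains to convert this growth of $E_{\mu,b}^+$ into growth of $\|(y_n(t),\dot{y}_n(t))\|_\mu$. The parallelogram identity together with the cone inequality $E_{\mu,b}^+\gs E_{\mu,b}^-$ gives
\[ \|\dot{\nb}f_n\|_\mu^2+(1-b^2)\|\A^{1/2}\nb f_n\|_\mu^2\gs \tfrac{1}{2}E_{\mu,b}^+, \]
so at least one of $\|\dot{\nb}f_n\|_\mu$ and $\|\nb f_n\|_{\mu+1/2}$ diverges. Because the invariant cone also enforces $E_{\mu,b}^+\gs\sqrt{n}F_{\mu+1}$ and $E_{\mu+1/4,b}^+\gs cn^{3/4}G_\mu$, the energy is forced to remain concentrated on the single low mode $\bar{z}^{n-1}$ appearing in the linear template \eqref{soln}; on that subspace $\A^{1/2}$ contributes only a $\sqrt{n}$-weight, so the divergence propagates to $\|\nb f_n(t)\|_\mu^2\gs c'n^{2\mu-2}e^{-2n^{1/4}}e^{\sqrt{1-b^2}\sqrt{n}\,t}$. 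The $L^2$-orthogonality of \eqref{y.decomp}, upgraded to $H^\mu$ by standard elliptic estimates, then yields $\|y_n(t)\|_\mu\gs c\|\nb f_n(t)\|_\mu\to\infty$.

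The principal obstacles are two. First, placing the initial excitation inside the invariant cone, which rests crucially on the harmonic-gradient form of $\bar{z}^n$ and its high-frequency localization (both needed to force $N=F=G=0$ at $t=0$, leaving only $E^+=E^-$). Second, translating $E_{\mu,b}^+$-growth into $H^\mu$-divergence of $(y_n,\dot y_n)$, which requires using the cone to rule out leakage of the energy to arbitrarily high modes. Everything else---the polynomial/exponential comparison $\sqrt{n}\,t\gg n^{1/4}$ and the bookkeeping surrounding Propositions~\ref{prop.1} and~\ref{prop.2}---is mechanical once these two points are secured.
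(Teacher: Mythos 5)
Your proposal is essentially the paper's own argument: check $y_n(0)=0$ and $\norm{\dot y_n(0)}_\mu\sim e^{-n^{1/4}}n^{\mu-1/2}\to0$; observe that $\dot y_n(0)=e^{-n^{1/4}}\bar z^n$ is a pure harmonic gradient of degree $\gs n$, so under the decomposition \eqref{y.decomp} it lives entirely in the $\nb f$-summand with $N=\nb g=0$, placing the triple $(E,F,G)$ on the boundary of the invariant set of Proposition~\ref{prop.2}; then invoke invariance, Theorem~\ref{thm.evo}, and $E_{\mu,b}^+(0)\sim n^{2\mu-1}e^{-2n^{1/4}}$ to force $E_{\mu,b}^+(t)\to\infty$ for $t>0$. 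This matches the paper line by line, and your explicit verification of the decomposition at $t=0$ is a useful elaboration the paper leaves implicit.

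The one soft spot in your write-up is the final ``no leakage to high modes'' sub-argument: you claim the cone inequalities $E_{\mu,b}^+\gs\sqrt n F_{\mu+1}$ and $E_{\mu+1/4,b}^+\gs c n^{3/4}G_\mu$ force the energy to stay concentrated on the lowest mode $\bar z^{n-1}$ and then deduce $\norm{\nb f_n(t)}_\mu\gs c' n^{2\mu-2}e^{-2n^{1/4}}e^{\sqrt{1-b^2}\sqrt n t}$. That is not what those inequalities say: $F$ controls $N$ and $G$ controls $\nb g$, but neither constrains how the energy in $\nb f$ spreads over the modes $\bar z^k$, $k\gs n-1$. If the energy migrated to very high $k$, $\A^{1/2}$ could swamp $\norm{\nb f}_\mu$ while $E_{\mu,b}^+$ still diverges, so the stated lower bound on $\norm{\nb f_n(t)}_\mu$ does not follow from the cone. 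The honest conclusion from the parallelogram step is only that $\norm{\dot{\nb}f}_\mu^2+\norm{\nb f}_{\mu+1/2}^2\gs\tfrac12 E_{\mu,b}^+\to\infty$; the paper itself passes from there directly to $\norm{(y_n,\dot y_n)}_\mu\to\infty$ without additional justification, so this index mismatch is present in both proofs rather than a gap you introduced. Dropping the concentration claim and stating the conclusion as the paper does (divergence of $E_{\mu,b}^+$, hence of the $H^\mu$ pair-norm) keeps you on the paper's footing; the extra sub-argument as written is not correct and should be removed.
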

\begin{proof}
	Decompose $y_n$ into $\nb f+\nb g+N$, then we define $E_{\mu,b}^\pm$, $F_\mu$ and $G_\mu$ which are in the invariant set of Proposition \ref{prop.2} at time $t=0$. Hence, $E_{\mu,b}^+(t)\gs E_{\mu,b}^+(0)e^{\sqrt{1-b^2}\sqrt{n}t}$,  by Theorem \ref{thm.evo}.  However,
	\begin{align*}
	E_{\mu,b}^+(0)=&\int_\Omega |\A^\mu(\dot{\nb}f\pm \sqrt{1-b^2}\A^{\frac{1}{2}}\nb f)|^2_{t=0}dx\\
	=&\int_\Omega |(n-1)^\mu e^{-n^{1/4}}\bar{z}^n|^2dx\\
	=&\int_0^{2\pi}d\theta \int_0^1 e^{-2n^{1/4}}(n-1)^{2\mu}r^{2n}dr\\
	=&\frac{2\pi (n-1)^{2\mu}}{2n+1}e^{-2n^{1/4}}.
	\end{align*}
	Therefore, 
	$$E_{\mu,b}^+(t)\gs \frac{2\pi (n-1)^{2\mu}}{2n+1}e^{\sqrt{1-b^2}\sqrt{n}t-2n^{1/4}},$$
	which tends to $\infty$ for any $t>0$ as $n\to \infty$. Therefore,  $$\norm{(y_n(t),\dot{y}_n(t))}_\mu\to\infty$$ for $t>0$. 
\end{proof}

The above theorem shows the  ill-posedness for that solutions $\zeta_n(t)$ would exist and converge to $\eta(t)$ on some interval $[0,T]$
if \eqref{eqn1} were well posed. 

The final remark is that the analysis of this paper is 
uniform in the vacuum permeability $\mu_0>0$ as long as we take $b^2\ll \min(1,\mu_0)$ in the proofs.

\medskip

\noindent \textit{Acknowledgements.} Hao was partially supported by NSFC grants 11671384 and 11628103. Luo was  supported by a GRF grant CityU 11303616 of RGC (Hong Kong).


\begin{thebibliography}{00}
	\providecommand{\url}[1]{{#1}}
	\providecommand{\urlprefix}{URL }
	\expandafter\ifx\csname urlstyle\endcsname\relax
	\providecommand{\doi}[1]{DOI~\discretionary{}{}{}#1}\else
	\providecommand{\doi}{DOI~\discretionary{}{}{}\begingroup
		\urlstyle{rm}\Url}\fi
	
	
	\bibitem{ABZ}  Alazard, T., Burq, N., Zuily, C.:  On the Cauchy problem for gravity water waves. Invent. Math. \textbf{198}(1), 71-163 (2014)
	
	\bibitem{AM}Ambrose, D.M., Masmoudi, N.: The zero surface tension limit of two-dimensional water waves. \newblock Comm. Pure Appl. Math. \textbf{58}(10), 1287--1315 (2005)
	
	
	\bibitem{chenwang} Chen, G.-Q., Wang, Y.-G.: Existence and stability of compressible current-vortex sheets in three-dimensional magnetohydrodynamics. Arch. Ration. Mech. Anal. \textbf{187}(3), 369--408 (2008)
	
	\bibitem{CL00}
	Christodoulou, D., Lindblad, H.: On the motion of the free surface of a liquid.
	\newblock Comm. Pure Appl. Math. \textbf{53}(12), 1536--1602 (2000)
	
	\bibitem{Coutand}
	Coutand, D., Shkoller, S.: Well-posedness of the free-surface incompressible
	{E}uler equations with or without surface tension.
	\newblock J. Amer. Math. Soc. \textbf{20}(3), 829--930 (2007).
	
	\bibitem{Ebin}
	Ebin, D.G.: The equations of motion of a perfect fluid with free boundary are
	not well posed.
	\newblock Comm. Partial Differential Equations \textbf{12}(10), 1175--1201
	(1987)
	
	\bibitem{GW} Gu, X., Wang, Y.: On the construction of solutions to the free-surface incompressible ideal magnetohydrodynamic equations, arXiv:1609.07013 
	
	
	\bibitem{Hao17}
	Hao, C.: On the motion of free interface in ideal incompressible {MHD}.
	\newblock Arch. Ration. Mech. Anal. \textbf{224}(2), 515--553 (2017)
	
	\bibitem{HLarma}
	Hao, C., Luo, T.: A priori estimates for free boundary problem of
	incompressible inviscid magnetohydrodynamic flows.
	\newblock Arch. Ration. Mech. Anal. \textbf{212}(3), 805--847 (2014)
	
	\bibitem{HaoWang} Hao  C., Wang, D.: A priori estimates for the free boundary problem of incompressible neo-Hookean elastodynamics, 
	\newblock J. Differential Equations 
	\textbf{261}(1),  712--737 (2016)
	
	\bibitem{Lee}
	Lee, D.: Uniform estimate of viscous free-boundary magnetohydrodynamics with
	zero vacuum magnetic field.
	\newblock SIAM J. Math. Anal. \textbf{49}(4), 2710--2789 (2017)
	
	\bibitem{Lee2} Lee, D.: Zero kinetic viscosity-magnetic diffusivity limit of free boundary magnetohydrodynamics. arXiv:1409.5384
	
	\bibitem{L1}Lannes, D.: Well-posedness of the water waves equations. J. Am. Math. Soc. \textbf{18}(3), 605--54 (2005)
	
	
	\bibitem{Lindblad2}
	Lindblad, H.: Well-posedness for the motion of an incompressible liquid with
	free surface boundary.
	\newblock Ann. of Math. (2) \textbf{162}(1), 109--194 (2005)
	
	\bibitem{LN}
	Lindblad, H., Nordgren, K.H.: A priori estimates for the motion of a self-gravitating incompressible liquid with free surface boundary.
	\newblock J. Hyperbolic Differ. Equ. \textbf{6}(2), 407--432 (2009)
	
	
	\bibitem{luozeng} Luo, T., Zeng, H.: On the free surface motion of highly subsonic heat-conducting inviscid flows. \newblock arXiv:1709.06925
	
	\bibitem{MTT14}
	Morando, A., Trakhinin, Y., Trebeschi, P.: Well-posedness of the linearized
	plasma-vacuum interface problem in ideal incompressible {MHD}.
	\newblock Quart. Appl. Math. \textbf{72}(3), 549--587 (2014)
	
	\bibitem{Morr09}
	Morrison, P.J.: On hamiltonian and action principle formulations of plasma
	dynamics.
	\newblock AIP Conference Proceedings \textbf{1188}(1), 329--344 (2009)
	
	\bibitem{Niren59}
	Nirenberg, L.: On elliptic partial differential equations.
	\newblock Ann. Scuola Norm. Sup. Pisa  \textbf{13}(3), 115--162 (1959)
	
	\bibitem{ST} Secchi, P., Trakhinin, Y.: Well-posedness of the plasma-vacuum interface problem. Nonlinearity \textbf{27}(1), 105--169 (2014)
	
	\bibitem{SZ}
	Shatah, J., Zeng, C.: Geometry and a priori estimates for free boundary
	problems of the {E}uler equation.
	\newblock Comm. Pure Appl. Math. \textbf{61}(5), 698--744 (2008)
	
	\bibitem{PS10}
	Padula, M., Solonnikov, V.A.: On the free boundary problem of
	magnetohydrodynamics.
	\newblock Zap. Nauchn. Sem. S.-Peterburg. Otdel. Mat. Inst. Steklov. (POMI)
	\textbf{385}, 135--186 (2010).
	\newblock Translation in J. Math. Sci. (N. Y.) \textbf{178}(3), 313--344 (2011)
	
	\bibitem{SWZ}  Sun, Y., Wang, W., Zhang, Z.: Nonlinear stability of the current-vortex sheet to the incompressible MHD equations. Comm. Pure Appl. Math. \textbf{71}(2), 356--403 (2018)
	
	\bibitem{Trakhinin} Trakhinin, Y.: On well-posedness of the plasma-vacuum interface problem: the case of non-elliptic interface symbol. Commun. Pure Appl. Anal. \textbf{15}(4), 1371--1399 (2016)
	
	\bibitem{WangYu13} Wang, Y.-G.; Yu, F.: Stabilization effect of magnetic fields on two-dimensional compressible current-vortex sheets. Arch. Ration. Mech. Anal. \textbf{208}(2), 341--389 (2013)
	
	\bibitem{WangXin17} Wang, Y., Xin, Z.: Incompressible inviscid resistive MHD surface waves in 2D, \newblock 	arXiv:1801.04694 
	
	
	\bibitem{wu1}
	Wu, S.: Well-posedness in {S}obolev spaces of the full water wave problem in
	{$2$}-{D}.
	\newblock Invent. Math. \textbf{130}(1), 39--72 (1997)
	
	\bibitem{wu2}
	Wu, S.: Well-posedness in {S}obolev spaces of the full water wave problem in
	3-{D}.
	\newblock J. Amer. Math. Soc. \textbf{12}(2), 445--495 (1999)
	
	\bibitem{zhang}
	Zhang, P., Zhang, Z.: On the free boundary problem of three-dimensional   incompressible {E}uler equations.
	\newblock Comm. Pure Appl. Math. \textbf{61}(7), 877--940 (2008)
	
\end{thebibliography}
\end{document}